\begin{document}

\title{Regularity theory and numerical algorithm for the fractional  Klein-Kramers equation}
% Short title for running heads:
\shorttitle{NUMERICAL ALGORITHM FOR THE FRACTIONAL  KLEIN-KRAMERS EQUATION}

\author{%
{\sc
Jing Sun\thanks{Email: js@lzu.edu.cn}
and
Daxin Nie\thanks{Email: ndx1993@163.com}
and
Weihua Deng\thanks{Corresponding author. Email: dengwh@lzu.edu.cn}
} \\[2pt]
School of Mathematics and Statistics, Gansu Key Laboratory of Applied Mathematics and\\ Complex Systems, Lanzhou University, Lanzhou 730000, P.R. China}
%Simula Research Laboratory, PO Box 134, N-1325 Lysaker, Norway and\\
%Department of Informatics, University of Oslo, PO Box 1080, N-0316

% Short list of authors for running heads:
\shortauthorlist{J. Sun, D.X. Nie and W.H. Deng
}

\maketitle

\begin{abstract}
% Body of abstract:
{Fractional Klein-Kramers equation can well describe subdiffusion in phase space. In this paper, we develop the fully discrete scheme for fractional Klein-Kramers equation based on the backward Euler convolution quadrature and local discontinuous Galerkin methods. Thanks to the obtained sharp regularity estimates in temporal and spatial directions after overcoming the hypocoercivity of the operator, the complete error analyses of the fully discrete scheme are built. % , the main challenge of which comes from the hypocoercivity of the operator.
It's worth mentioning that the convergence of the provided scheme is independent of the temporal regularity of the exact solution. Finally, numerical results are proposed to verify the correctness of the theoretical results.}
% Keywords:
{fractional Klein-Kramers equation; regularity estimate;  convolution quadrature; local discontinuous Galerkin method; error analysis}
\end{abstract}

\section{Introduction}
Subdiffusion is ubiquitous in the nature world \citep[see, e.g.,][]{Metzler.2000Trwsgtadafda}. %is an important anomalous diffusion phenomen, which  widely exists in the nature world \citep[see, e.g.,][]{Metzler.2000Trwsgtadafda}.
Microscopically, it can be modeled by Langevin dynamics with long-tailed trapping \citep[see, e.g.,][]{Metzler.2000Stctteftletfd}. To describe how the presence
of the trapping events leads to the macroscopic observation of subdiffusion, the authors establish the fractional Klein-Kramers equation \citep[see, e.g.,][]{Metzler.2000FaGCKEttFKKE,Metzler.2000Stctteftletfd}. This paper is concerned with the regularity estimate and numerical analysis for the fractional Klein-Kramers equation, i.e.,
%\begin{equation}\label{eqretosol}
%	\left\{\begin{aligned}
%		&\partial_{t}G(x,v,t)+{}_{0}\partial_{t}^{1-\alpha}\left (\gamma v\frac{\partial}{\partial x}-\gamma\frac{\partial }{\partial v}\eta v-\frac{\gamma \eta}{m\beta}\frac{\partial^{2}}{\partial v^{2}}\right )G(x,v,t)={}_{0}\partial_{t}^{1-\alpha}f \qquad ((x,v),t)\in\Omega\times (0,T],\\
%		&G(x,v,0)=G_{0}\qquad (x,v)\in\Omega,\\
%		&G(x,0,t)=G(x,1,t)=0\qquad (x,t)\in(0,1)\times (0,T],\\
%		&G(0,v,t)=0\qquad (v,t)\in(0,1)\times (0,T],
%	\end{aligned}\right.
%\end{equation}
\begin{equation}\label{eqretosol}
	\begin{aligned}
		&\partial_{t}G(x,v,t)+{}_{0}\partial_{t}^{1-\alpha}\left (\gamma v\frac{\partial}{\partial x}-\gamma\frac{\partial }{\partial v}\eta v-\frac{\gamma \eta}{m\beta}\frac{\partial^{2}}{\partial v^{2}}\right )G(x,v,t)\\
		&\qquad \qquad\qquad \qquad\qquad \qquad\qquad \qquad\qquad \qquad ={}_{0}\partial_{t}^{1-\alpha}f \qquad ((x,v),t)\in\Omega\times (0,T]
	\end{aligned}
\end{equation}
with initial condition
\begin{equation*}
	G(x,v,0)=G_{0}\qquad (x,v)\in\Omega
\end{equation*}
and boundary conditions
\begin{equation}\label{eqbdcond}
	\begin{aligned}
		&G(x,0,t)=G(x,1,t)=0 & \qquad (x,t)\in(0,1)\times (0,T],\\
		&G(0,v,t)=0 & \qquad (v,t)\in(0,1)\times(0,T].
	\end{aligned}
\end{equation}
Here $\Omega=\{(x,v)~|~0<x<1,~0<v<1\}$; $T$ denotes the fixed terminal time; $f$ is source term; $v$ is the velocity, $\eta$ is the friction constant, $m$ is the mass of the particle, $\gamma$ is the ratio of the intertrapping time scale and the internal waiting time scale, and $\beta$ is a variable related to the temperature and Boltzmann's constant; without loss of generality, we take $\eta=\beta=m=\gamma=1$ in the following;  $~_0\partial^{1-\alpha}_t$ is the Riemann-Liouville fractional derivative with $\alpha\in(0,1)$ defined by \citep[see, e.g.,][]{Podlubny.1999Fde}
\begin{equation}
	_{0}\partial^{1-\alpha}_tG=\frac{1}{\Gamma(\alpha)}\frac{\partial}{\partial t}\int^t_{0}(t-\xi)^{\alpha-1}G(\xi)d\xi.
\end{equation}

In the past few years, there have been some discussions for solving fractional Klein-Kramers equation numerically \citep[see, e.g.,][]{Deng.2011Fdmatpcftfkke,Gao.2012AfdaftibvpotfKKeips,Li.2012FdaadsftLFKKe,Nikan.2021NeotfKKmaimd}. In \citet{Deng.2011Fdmatpcftfkke}, the authors consider the finite difference scheme for the fractional Klein-Kramers equation and provide the corresponding error analyses; furthermore the authors use finite difference scheme to solve fractional Klein-Kramers equation with Riesz fractional derivative in \citet{Li.2012FdaadsftLFKKe} and  \citet{Nikan.2021NeotfKKmaimd} provide a hybrid algorithm using the local radial basis functions based on  finite difference to obtain the numerical solution of the fractional Klein-Kramers equation. From the above works, it can be noted that the corresponding numerical discussions in Galerkin framework for fractional  Klein-Kramers equation are rare.

In this paper, we first build the regularity of Eq. \eqref{eqretosol}, and then present the robust numerical scheme and complete error analyses. As for the regularity estimates, to overcome the challenges caused by the hypocoercivity of the operator $\left (v\frac{\partial}{\partial x}-\frac{\partial }{\partial v} v-\frac{\partial^{2}}{\partial v^{2}}\right )$, we introduce a new operator $\mathcal{L}$ (one can refer to \eqref{eqdefL}) and provide the corresponding resolvent estimate (see Lemma \ref{lemresest}); with the help of equivalent form of \eqref{eqretosol} and resolvent estimate, we find
\begin{equation*}
	\|G(t)\|_{L^{2}(\Omega)}\leq C\|G_{0}\|_{L^{2}(\Omega)}+C\|f(0)\|_{L^{2}(\Omega)}+C\int_{0}^{t}\|f'(s)\|_{L^{2}(\Omega)}ds
\end{equation*}
and
\begin{equation*}
	\|G'(t)\|_{L^{2}(\Omega)}\leq Ct^{-1}\|G_{0}\|_{L^{2}(\Omega)}+Ct^{\alpha-1}\|f(0)\|_{L^{2}(\Omega)}+C\int_{0}^{t}(t-s)^{\alpha-1}\|f'(s)\|_{L^{2}(\Omega)}ds.
\end{equation*}
Next we use backward Euler convolution quadrature to discretize the temporal derivative and an $\mathcal{O}(\tau)$ convergence rate is obtained without any regularity assumptions on the exact solution. At last, we use local discontinuous Galerkin method to discretize spatial derivative; to obtain the stability and the convergence of the fully-discrete scheme, we build a new discrete Gr\"onwall's inequality (see Lemma \ref{lemstab} for the details).

The plan of the paper is as follows. Next, we provide the regularity of the fractional Klein-Kramers equation in temporal  and spatial directions, respectively. In Section 3, the time semi-discrete scheme is built by backward Euler convolution quadrature and the resulting error analysis is also provided. Then we use the local discontinuous Galerkin method to  discretize the space operator and the error estimate is obtained in Section 4. Section 5 validates the effectiveness of the algorithm by extensive numerical experiments. We conclude the paper with some discussions in the last section. Throughout the paper, $C$ is  a positive constant, whose value may differ at different places; $\|\cdot\|$ stands for the operator norm from $L^{2}(\Omega)$ to $L^{2}(\Omega)$; and `$\tilde{~}$' means Laplace transform.

\section{Regularity of the solution}
Here we first provide some notations, and then present the solution and discuss its regularity.
Define $\Gamma_{\theta,\kappa}$ for $\kappa>0$ and $\theta\in(\frac{\pi}{2},\pi)$ as
\begin{equation*}
	\Gamma_{\theta,\kappa}=\{r e^{-\mathbf{i}\theta}: r\geq \kappa\}\cup\{\kappa e^{\mathbf{i}\omega}: |\omega|\leq \theta\}\cup\{r e^{\mathbf{i}\theta}: r\geq \kappa\},
\end{equation*}
where the circular arc is oriented counterclockwise and the two rays are oriented with an increasing imaginary part. Here $\mathbf{i}$ denotes the imaginary unit.
Define the operator $\mathcal{L}$ as
\begin{equation}\label{eqdefL}
	\mathcal{L}= v\frac{\partial}{\partial x}-\frac{\partial }{\partial v} v-\frac{\partial^{2}}{\partial v^{2}}+\frac{1}{2}
\end{equation}
with boundary conditions \eqref{eqbdcond}.

Applying ${}_{0}\partial^{\alpha-1}_{t}$ on both sides of \eqref{eqretosol} and using the definition of $\mathcal{L}$, we can get the equivalent form of \eqref{eqretosol}, i.e., 
\begin{equation}\label{eqretosol1}
	\left\{\begin{aligned}
		&{}_{0}\partial_{t}^{\alpha}(G(x,v,t)-G_{0})+\mathcal{L}G(x,v,t)=f+\frac{1}{2}G(x,v,t) & \qquad ((x,v),t)\in\Omega\times (0,T],\\
		&G(x,v,0)=G_{0} & \qquad (x,v)\in\Omega,\\
		&G(x,0,t)=G(x,1,t)=0 & \qquad (x,t)\in(0,1)\times (0,T],\\
		&G(0,v,t)=0  & \qquad (v,t)\in(0,1)\times(0,T].
	\end{aligned}\right.
\end{equation}
As for the operator $\mathcal{L}$, according to its definition \eqref{eqdefL} and using integration by parts, it's easy to check that
\begin{equation*}
	\begin{aligned}
		&(\mathcal{L} G,G)\\
		=&\int_{0}^{1}v\int_{0}^{1}\frac{\partial }{\partial x}G(x,v,t)G(x,v,t)dxdv-\int_{0}^{1}\int_{0}^{1}\left (\frac{\partial }{\partial v}(vG(x,v,t))\right )G(x,v,t)dvdx\\
		&-\int_{0}^{1}\int_{0}^{1}\frac{\partial^{2}}{\partial v^{2}}G(x,v,t)G(x,v,t)dvdx+\frac{1}{2}\int_{0}^{1}\int_{0}^{1}G(x,v,t)G(x,v,t)dxdv\\
		=&\frac{1}{2}\int_{0}^{1}v\int_{0}^{1}\frac{\partial }{\partial x}(G(x,v,t))^{2}dxdv-\int_{0}^{1}\int_{0}^{1}\left (v\frac{\partial }{\partial v}G(x,v,t)+G(x,v,t)\right )G(x,v,t)dvdx\\
		&+\int_{0}^{1}\int_{0}^{1}\frac{\partial}{\partial v}G(x,v,t)\frac{\partial}{\partial v}G(x,v,t)dvdx+\frac{1}{2}\int_{0}^{1}\int_{0}^{1}G(x,v,t)G(x,v,t)dxdv\\
		=&\frac{1}{2}\int_{0}^{1}v(G(1,v,t))^{2}dv-\frac{1}{2}\int_{0}^{1}\int_{0}^{1}v\frac{\partial }{\partial v}(G(x,v,t))^{2}dvdx\\
		&+\int_{0}^{1}\int_{0}^{1}\frac{\partial}{\partial v}G(x,v,t)\frac{\partial}{\partial v}G(x,v,t)dvdx-\frac{1}{2}\int_{0}^{1}\int_{0}^{1}G(x,v,t)^{2}dvdx,
	\end{aligned}
\end{equation*}
where $(\cdot,\cdot)$ means the inner product on $\Omega$.
Since $G(x,0,t)=G(x,1,t)=0$, it follows that
\begin{equation*}
	\begin{aligned}
		&\int_{0}^{1}\int_{0}^{1}v\frac{\partial }{\partial v}G(x,v,t)^{2}dvdx\\
		=&\int_{0}^{1}\int_{0}^{1}vdG(x,v,t)^{2}dx\\
		=&\int_{0}^{1}vG(x,v,t)^{2}|_{v=0}^{v=1}dx-\int_{0}^{1}\int_{0}^{1}G(x,v,t)^{2}dvdx\\
		=&-\int_{0}^{1}\int_{0}^{1}G(x,v,t)^{2}dvdx,
	\end{aligned}
\end{equation*}
implying
\begin{equation}\label{eqbinformL}
	\begin{aligned}
		(\mathcal{L} G,G)
		=&\frac{1}{2}\int_{0}^{1}vG(1,v,t)^{2}dv+\int_{0}^{1}\int_{0}^{1}\frac{\partial}{\partial v}G(x,v,t)\frac{\partial}{\partial v}G(x,v,t)dvdx\geq 0,
	\end{aligned}
\end{equation}
namely, $\mathcal{L}$ is a positive operator.

Next, to get the regularity of the solution of \eqref{eqretosol1} and using $f(t)=f(0)+\int_{0}^{t}f'(s)ds$, we present the solution in Laplace space, i.e.,
\begin{equation}\label{eqsollap}
	\tilde{G}(z)=\tilde{E}(z)G_{0}+\tilde{\mathcal{R}}(z)f(0)+\tilde{\mathcal{R}}(z)\tilde{f'}(z)+\frac{1}{2}\tilde{F}(z)\tilde{G}(z),
\end{equation}
where `$\tilde{~}$' means Laplace transform and
\begin{equation*}
	\begin{aligned}
		&\tilde{E}(z)=z^{\alpha-1}(z^{\alpha}+\mathcal{L})^{-1},\\
		&\tilde{F}(z)=(z^{\alpha}+\mathcal{L})^{-1},\\
		&\tilde{\mathcal{R}}(z)=z^{-1}(z^{\alpha}+\mathcal{L})^{-1}.
	\end{aligned}
\end{equation*}
%\begin{equation*}
%	\begin{aligned}
%		&E(t)=\frac{1}{2\pi\mathbf{i}}\int_{\Gamma_{\theta,\kappa}}e^{zt}z^{\alpha-1}(z^{\alpha}+\mathcal{L})^{-1}dz,\\
%		&F(t)=\frac{1}{2\pi\mathbf{i}}\int_{\Gamma_{\theta,\kappa}}e^{zt}(z^{\alpha}+\mathcal{L})^{-1}dz,\\
%		&\mathcal{R}(t)=\frac{1}{2\pi\mathbf{i}}\int_{\Gamma_{\theta,\kappa}}e^{zt}z^{-1}(z^{\alpha}+\mathcal{L})^{-1}dz.
%	\end{aligned}
%\end{equation*}
Thus, iterating \eqref{eqsollap} $m$ $(m\in\mathbb{N})$ times, one can get
\begin{equation}\label{eqsollap2}
	\tilde{G}(z)=\sum_{k=0}^{m}\left (\frac{1}{2}\tilde{F}(z)\right )^{k}\left(\tilde{E}(z)G_{0}+\tilde{\mathcal{R}}(z)f(0)+\tilde{\mathcal{R}}(z)\tilde{f'}(z)\right)+\left(\frac{1}{2}\tilde{F}(z)\right)^{m+1}\tilde{G}(z).
\end{equation}

Now we present the resolvent estimate of $\mathcal{L}$.

\begin{lemma}\label{lemresest}
	Let $\mathcal{L}$ be defined in \eqref{eqdefL}. Then for $z\in \Sigma_{\theta}=\{z\in\mathbb{C}:z\neq 0,|\arg z|\leq \theta\}$ with $\theta\in (\frac{\pi}{2},\pi)$, there holds
	\begin{equation*}
		\|(z+\mathcal{L})^{-1}\|\leq C|z|^{-1}.
	\end{equation*}
\end{lemma}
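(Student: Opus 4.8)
The plan is to prove the equivalent a priori estimate $\|(z+\mathcal{L})u\|_{L^{2}}\ge c(\theta)\,|z|\,\|u\|_{L^{2}}$ for every $z\in\Sigma_{\theta}$ and every $u$ in the domain of $\mathcal{L}$; combined with the surjectivity of $z+\mathcal{L}$ this yields the stated bound by setting $u=(z+\mathcal{L})^{-1}F$. First I would test the resolvent equation $zu+\mathcal{L}u=F$ against $u$ in $L^{2}(\Omega)$ and write $(\mathcal{L}u,u)=P+\mathbf{i}Q$ with $P=\mathrm{Re}(\mathcal{L}u,u)$ and $Q=\mathrm{Im}(\mathcal{L}u,u)$. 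Writing $z=|z|e^{\mathbf{i}\varphi}$ with $|\varphi|\le\theta$ and taking real and imaginary parts of $z\|u\|_{L^{2}}^{2}+(\mathcal{L}u,u)=(F,u)$ produces the two scalar identities $|z|\cos\varphi\,\|u\|_{L^{2}}^{2}+P=\mathrm{Re}(F,u)$ and $|z|\sin\varphi\,\|u\|_{L^{2}}^{2}+Q=\mathrm{Im}(F,u)$, which I will exploit in different regimes of $\varphi$.

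Repeating for complex-valued $u$ the integration by parts that produced \eqref{eqbinformL} gives $P=\tfrac12\int_{0}^{1}v|u(1,v)|^{2}\,dv+\|\partial_{v}u\|_{L^{2}}^{2}\ge 0$. Since $u$ vanishes at $v=0,1$, the Poincar\'e inequality in $v$ upgrades this to the \emph{coercivity} $P\ge c_{0}\|u\|_{L^{2}}^{2}$ for some $c_{0}>0$. For $|\varphi|\le\tfrac{\pi}{2}$ the first identity already gives $|z|\cos\varphi\,\|u\|_{L^{2}}^{2}\le\|F\|_{L^{2}}\|u\|_{L^{2}}$, and for $|\varphi|\le\tfrac{\pi}{2}-\delta$ this is exactly the desired estimate; the coercivity of $P$ moreover furnishes the quantitative control of $\|\partial_{v}u\|_{L^{2}}$ and of the outflow boundary term $\int_{0}^{1}v|u(1,v)|^{2}\,dv$ that is needed afterwards.

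The genuine difficulty is the range $\tfrac{\pi}{2}\le|\varphi|<\theta$, where $\cos\varphi\le 0$ renders the real-part identity useless, so that $|z|\|u\|_{L^{2}}^{2}$ must instead be extracted from the imaginary-part identity by controlling $Q$. Expanding $Q$ shows that every term except the transport contribution $\mathrm{Im}\int_{\Omega}v\,\partial_{x}u\,\bar u\,dx\,dv$ is dominated by $P$ through Cauchy--Schwarz and Poincar\'e; but $P$ controls only $\partial_{v}u$ and never $\partial_{x}u$, so $Q$ cannot be bounded by $P$. This is precisely the hypocoercivity of $\mathcal{L}$, and it is the main obstacle of the proof.

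To overcome it I would run a hypocoercivity argument organised around the commutator identity $[\partial_{v},\,v\partial_{x}]=\partial_{x}$, which shows that the missing $x$-derivative is hidden in the $v$-diffusion already controlled by $P$. Concretely, I would test the resolvent equation against an auxiliary multiplier coupling $\partial_{v}$ and $\partial_{x}$ (for instance one built from the $v$-elliptic solve $-\partial_{vv}w=\partial_{x}u$ with the boundary conditions \eqref{eqbdcond}), scaled by the appropriate power of $z$, so as to recover a weighted bound on $\partial_{x}u$ and thereby estimate $\bigl|\mathrm{Im}\int_{\Omega}v\,\partial_{x}u\,\bar u\bigr|\le \varepsilon|z|\|u\|_{L^{2}}^{2}+C_{\varepsilon}\bigl(\|F\|_{L^{2}}\|u\|_{L^{2}}+P\bigr)$. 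Inserting this into the imaginary-part identity, using $P\le\mathrm{Re}(F,u)+|z|\,|\cos\varphi|\,\|u\|_{L^{2}}^{2}$ from the real-part identity, and absorbing the $\varepsilon|z|\|u\|_{L^{2}}^{2}$ term on the left yields $|z|\|u\|_{L^{2}}^{2}\le C(\theta)\|F\|_{L^{2}}\|u\|_{L^{2}}$, i.e. $\|u\|_{L^{2}}\le C(\theta)|z|^{-1}\|F\|_{L^{2}}$. Finally, the coercivity of the real part gives invertibility of $z+\mathcal{L}$ for $\mathrm{Re}(z)$ not too negative by Lax--Milgram, and the a priori estimate together with the openness of the resolvent set (a continuity/continuation argument in $z$) extends invertibility to all of $\Sigma_{\theta}$, so the a priori bound transfers to the resolvent and finishes the proof with $C=C(\theta)$.
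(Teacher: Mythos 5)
Your proposal takes a genuinely different route from the paper, and in doing so it isolates a real difficulty --- but its own decisive step is never carried out, so as written it has a gap. The paper's proof is a two-line numerical-range argument: test $(z+\mathcal{L})u=g$ against $u$, invoke $(\mathcal{L}u,u)\ge 0$ from \eqref{eqbinformL}, and conclude $\|u\|_{L^2(\Omega)}\le C|z|^{-1}\|g\|_{L^2(\Omega)}$ as if $(\mathcal{L}u,u)$ were a nonnegative real number. You correctly observe that for complex-valued $u$ it is not: $\mathrm{Re}(\mathcal{L}u,u)=\frac{1}{2}\int_0^1 v|u(1,v)|^2\,dv+\|\partial_v u\|_{L^2}^2$ controls only $\partial_v u$, while $\mathrm{Im}(\mathcal{L}u,u)$ contains the transport contribution $\mathrm{Im}\int_\Omega v\,\partial_x u\,\bar u$, which is controlled by neither. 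Your handling of the easy regimes is fine: for $|\arg z|\le \pi/2-\delta$ the real-part identity suffices, and for $|z|$ in a bounded set the Poincar\'e coercivity $P\ge c_0\|u\|_{L^2}^2$ suffices.

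The gap is in the only regime that matters, $\pi/2\le|\arg z|\le\theta$ with $|z|$ large. There your entire argument reduces to the asserted bound $\bigl|\mathrm{Im}\int_\Omega v\,\partial_x u\,\bar u\bigr|\le\varepsilon|z|\|u\|_{L^2}^2+C_\varepsilon\bigl(\|F\|_{L^2}\|u\|_{L^2}+P\bigr)$, and this is exactly what you do not prove. You name the commutator $[\partial_v,v\partial_x]=\partial_x$ and gesture at an auxiliary multiplier built from $-\partial_{vv}w=\partial_x u$, ``scaled by the appropriate power of $z$,'' but no computation is offered showing that this multiplier recovers $\partial_x u$ at the scale $\varepsilon|z|\|u\|_{L^2}^2$ with constants uniform in $\arg z$ up to an angle $\theta>\pi/2$. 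H\'erau--Villani-type hypocoercivity constructions of this kind are engineered to produce a spectral gap and exponential semigroup decay, not a resolvent estimate in a sector of half-angle exceeding $\pi/2$ (i.e.\ analyticity of the semigroup); whether they can deliver the latter for a Kolmogorov-type operator is precisely the delicate point, and it cannot be taken for granted. Until that single estimate is established, your argument proves the lemma only on $|\arg z|\le\pi/2-\delta$ together with bounded sets of $z$, which is strictly weaker than the statement. For the comparison you were asked to make: the paper's proof never meets this difficulty because it silently treats $(\mathcal{L}u,u)$ as real and nonnegative; your proposal names the obstruction honestly but stops exactly where the work begins.
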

\begin{proof}
	Let $(z+\mathcal{L})u=g$. Then we have
	\begin{equation*}
		\begin{aligned}
			((z+\mathcal{L})u,u)=(g,u).
		\end{aligned}
	\end{equation*}
	According to \eqref{eqbinformL}, one obtains
	\begin{equation*}
		\begin{aligned}
			((z+\mathcal{L})u,u)\geq&|z|\|u\|_{L^{2}(\Omega)}^{2}.
		\end{aligned}
	\end{equation*}
	Using
	\begin{equation*}
		(g,u)\leq C\|g\|_{L^{2}(\Omega)}\|u\|_{L^{2}(\Omega)}
	\end{equation*}
leads to the desired result.
\end{proof}
\begin{remark}
	By Lemma \ref{lemresest}, the inverse Laplace transforms of $\tilde{E}(z)$, $\tilde{F}(z)$, and $\tilde{F}(z)$ satisfy
	\begin{equation}\label{eqoptEest}
		\begin{aligned}
			\|E(t)\|\leq & C,\quad
			\|F(t)\|\leq Ct^{\alpha-1},\quad \|\mathcal{R}(t)\|\leq Ct^{\alpha}.
		\end{aligned}
	\end{equation}
\end{remark}

Then we discuss spatial regularity of the solution $G$.
\begin{theorem}\label{thmspareg}
	Let $G$ be the solution of \eqref{eqretosol1}. Assume $G_{0},~f(0)\in L^{2}(\Omega)$, and $\int_{0}^{t}\|f'(s)\|_{L^{2}(\Omega)}ds<\infty$. Then one has
	\begin{equation*}
		\|G(t)\|_{L^{2}(\Omega)}\leq C\|G_{0}\|_{L^{2}(\Omega)}+C\|f(0)\|_{L^{2}(\Omega)}+C\int_{0}^{t}\|f'(s)\|_{L^{2}(\Omega)}ds.
	\end{equation*}
\end{theorem}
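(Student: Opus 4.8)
The plan is to invert the iterated Laplace-space identity \eqref{eqsollap2} term by term and to control the resulting Neumann series with the operator estimates \eqref{eqoptEest}. Taking inverse Laplace transforms and using the convolution theorem, the factor $\left(\frac{1}{2}\tilde{F}(z)\right)^{k}$ becomes the $k$-fold time convolution of $\frac{1}{2}F$, which I denote by $\Phi_{k}$ (so that $\Phi_{0}$ is the identity), while the products with $\tilde{E}$, $\tilde{\mathcal{R}}$ and $\tilde{\mathcal{R}}\tilde{f'}$ turn into convolutions in $t$. The $k$-th summand of $G(t)$ is then $(\Phi_{k}\ast E)(t)G_{0}+(\Phi_{k}\ast\mathcal{R})(t)f(0)+(\Phi_{k}\ast\mathcal{R}\ast f')(t)$, and the task reduces to summing these over $k$ and discarding the remainder.

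First I would establish the convolution bound for $\Phi_{k}$. Starting from $\|F(t)\|\leq Ct^{\alpha-1}$ and applying repeatedly the Beta-function identity $\int_{0}^{t}(t-s)^{a-1}s^{b-1}\,ds=B(a,b)\,t^{a+b-1}$, induction gives $\|\Phi_{k}(t)\|\leq\left(\frac{C}{2}\right)^{k}\frac{\Gamma(\alpha)^{k}}{\Gamma(k\alpha)}t^{k\alpha-1}$. Convolving this against the bounded kernel $E$ (with $\|E(t)\|\leq C$) and against $\mathcal{R}$ (with $\|\mathcal{R}(t)\|\leq Ct^{\alpha}$), one more Beta-function evaluation produces bounds whose general term is $\left(\frac{C\Gamma(\alpha)}{2}\right)^{k}\frac{t^{k\alpha}}{\Gamma(k\alpha+1)}$, respectively $\left(\frac{C\Gamma(\alpha)}{2}\right)^{k}\frac{\Gamma(\alpha+1)\,t^{(k+1)\alpha}}{\Gamma((k+1)\alpha+1)}$. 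These are precisely the terms of a Mittag-Leffler series, so the sums over $k$ converge and are bounded on $[0,T]$ by a single constant $C_{T}$; this yields the $\|G_{0}\|$ and $\|f(0)\|$ parts of the estimate. For the source contribution I would move the $k$-summation inside the time integral, so that $\left\|\sum_{k}(\Phi_{k}\ast\mathcal{R}\ast f')(t)\right\|\leq\int_{0}^{t}\Big(\sum_{k}\|(\Phi_{k}\ast\mathcal{R})(t-s)\|\Big)\|f'(s)\|\,ds\leq C_{T}\int_{0}^{t}\|f'(s)\|\,ds$, which is exactly the third term claimed.

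It remains to dispose of the remainder $\left(\frac{1}{2}\tilde{F}(z)\right)^{m+1}\tilde{G}(z)$ in \eqref{eqsollap2}, whose inverse transform is $\Phi_{m+1}\ast G$. Under an a priori local bound $\sup_{[0,T]}\|G\|<\infty$, the estimate for $\Phi_{m+1}$ gives $\|(\Phi_{m+1}\ast G)(t)\|\leq\left(\frac{C}{2}\right)^{m+1}\Gamma(\alpha)^{m+1}\sup_{[0,T]}\|G\|\,\frac{t^{(m+1)\alpha}}{\Gamma((m+1)\alpha+1)}\to0$ as $m\to\infty$, since $\Gamma((m+1)\alpha)$ outgrows the geometric factor. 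Letting $m\to\infty$ then closes the argument.

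I expect the main obstacle to be exactly this convergence bookkeeping: one must verify that the factorial growth of $\Gamma(k\alpha)$ in the denominators, which originates from the weak singularity $t^{\alpha-1}$ of $F$, dominates the accumulating constants $\left(\frac{C}{2}\right)^{k}$, so that the Neumann series converges uniformly on $[0,T]$ and the remainder vanishes. Equivalently, steps two and three can be packaged as a single weakly singular (Henry-type) Gr\"onwall inequality applied to $\|G(t)\|\leq C\|G_{0}\|+C\|f(0)\|+C\int_{0}^{t}\|f'(s)\|\,ds+C\int_{0}^{t}(t-s)^{\alpha-1}\|G(s)\|\,ds$, read off directly from the non-iterated identity \eqref{eqsollap} and \eqref{eqoptEest}; since the data are non-decreasing in $t$, they pass through the Mittag-Leffler factor and one only needs a little care to keep the source term as $\int_{0}^{t}$ rather than $\int_{0}^{T}$.
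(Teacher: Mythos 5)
Your argument is correct, but your primary route is not the one the paper takes: the paper's proof is exactly the ``equivalent packaging'' you relegate to your final paragraph. It inverts the non-iterated identity \eqref{eqsollap} once, reads off
$\|G(t)\|_{L^{2}(\Omega)}\leq C\|G_{0}\|_{L^{2}(\Omega)}+C\|f(0)\|_{L^{2}(\Omega)}+C\int_{0}^{t}\|f'(s)\|_{L^{2}(\Omega)}ds+C\int_{0}^{t}(t-s)^{\alpha-1}\|G(s)\|_{L^{2}(\Omega)}ds$
from \eqref{eqoptEest}, and closes with a weakly singular Gr\"onwall inequality --- three lines in total. Your main route instead inverts the iterated identity \eqref{eqsollap2}, sums the Neumann series $\sum_{k}\Phi_{k}$ via the Beta-function recursion $\|\Phi_{k}(t)\|\leq(C/2)^{k}\Gamma(\alpha)^{k}t^{k\alpha-1}/\Gamma(k\alpha)$, and sends $m\to\infty$ to kill the remainder; the computations are right, and in effect you are re-proving the singular Gr\"onwall lemma by hand through the Mittag--Leffler series. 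What your version buys is an explicit, self-contained constant $C_{T}\sim E_{\alpha}(CT^{\alpha})$ and no appeal to a cited Gr\"onwall lemma; what it costs is the bookkeeping you yourself flag, plus the need to state the a priori qualitative bound $\sup_{[0,T]}\|G(t)\|_{L^{2}(\Omega)}<\infty$ on the solution class to dispose of the remainder term (the paper's Gr\"onwall step needs the same local integrability of $\|G\|$ implicitly, so this is not a defect relative to the paper, but you should say it once rather than leave it parenthetical). It is also worth noting that the paper does use your finite-$m$ iterated representation, with $m=\lfloor 1/\alpha\rfloor$ and the remainder controlled by Theorem \ref{thmspareg}, but only later, in the proof of the derivative estimate.
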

\begin{proof}
	First, taking inverse Laplace transform for \eqref{eqsollap} leads to
	\begin{equation}\label{eqsolrep}
		G(t)=E(t)G_{0}+\mathcal{R}(t)f(0)+\int_{0}^{t}\mathcal{R}(t-s)f'(s)ds+\frac{1}{2}\int_{0}^{t}F(t-s)G(s)ds.
	\end{equation}
	According to \eqref{eqoptEest} and using $T/t\geq 1$, we have
	\begin{equation*}
		\begin{aligned}
			&\|G(t)\|_{L^{2}(\Omega)}\\\leq&\|E(t)G_{0}\|_{L^{2}(\Omega)}+\left\|\mathcal{R}(t)f(0)\right \|_{L^{2}(\Omega)}+\left \|\int_{0}^{t}\mathcal{R}(t-s)f'(s)ds\right \|_{L^{2}(\Omega)}+\left \|\frac{1}{2}\int_{0}^{t}F(t-s)G(s)ds\right \|_{L^{2}(\Omega)}\\
			\leq& C\|G_{0}\|_{L^{2}(\Omega)}+C\|f(0)\|_{L^{2}(\Omega)}+C\int_{0}^{t}\|f'(s)\|_{L^{2}(\Omega)}ds+C\int_{0}^{t}(t-s)^{\alpha-1}\|G(s)\|_{L^{2}(\Omega)}ds;
		\end{aligned}
	\end{equation*}
By Gr\"{o}nwall's inequality \citep[see, e.g.,][]{Elliott.1992EewsandfafemftCHe}, there exists 
	\begin{equation*}
		\|G(t)\|_{L^{2}(\Omega)}\leq C\|G_{0}\|_{L^{2}(\Omega)}+C\|f(0)\|_{L^{2}(\Omega)}+C\int_{0}^{t}\|f'(s)\|_{L^{2}(\Omega)}ds.
	\end{equation*}
	The proof is completed.
\end{proof}

Next, we provide the estimate of $\|G'(t)\|_{L^2(\Omega)}$, where $G'(t)$ denotes the first derivative of $G$ with respect to $t$.
\begin{theorem}\label{thmholdere}
	Let $G$ be the solution of \eqref{eqretosol1}. Assume $G_{0},~f(0)\in L^{2}(\Omega)$, and $\int_{0}^{t}\|f'(s)\|_{L^{2}(\Omega)}ds<\infty$. Then one has
	\begin{equation*}
		\|G'(t)\|_{L^{2}(\Omega)}\leq Ct^{-1}\|G_{0}\|_{L^{2}(\Omega)}+Ct^{\alpha-1}\|f(0)\|_{L^{2}(\Omega)}+C\int_{0}^{t}(t-s)^{\alpha-1}\|f'(s)\|_{L^{2}(\Omega)}ds.
	\end{equation*}
\end{theorem}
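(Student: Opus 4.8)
The plan is to differentiate the time-domain representation \eqref{eqsolrep} and reduce every resulting term to the sharp resolvent bound of Lemma \ref{lemresest}. Because \eqref{eqoptEest} gives $\|\mathcal R(t)\|\le Ct^{\alpha}$, we have $\mathcal R(0)=0$, so the forcing convolution differentiates cleanly into $\int_0^t\mathcal R'(t-s)f'(s)\,ds$; and because $G(0)=G_0$, the memory convolution differentiates into a boundary term plus a derivative convolution. Thus I would first establish
\begin{equation*}
	G'(t)=E'(t)G_0+\tfrac12F(t)G_0+\mathcal R'(t)f(0)+\int_0^t\mathcal R'(t-s)f'(s)\,ds+\tfrac12\int_0^tF(t-s)G'(s)\,ds .
\end{equation*}

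The next step is to bound the operator derivatives on the contour $\Gamma_{\theta,\kappa}$ with the $t$-adapted radius $\kappa=1/t$. Since $z^\alpha\in\Sigma_\theta$ whenever $z\in\Gamma_{\theta,\kappa}$, Lemma \ref{lemresest} yields $\|(z^\alpha+\mathcal L)^{-1}\|\le C|z|^{-\alpha}$ and hence $\|z^\alpha(z^\alpha+\mathcal L)^{-1}\|\le C$. Differentiating the inverse-Laplace representations gives $E'(t)=\frac{1}{2\pi\mathbf i}\int_{\Gamma_{\theta,1/t}}e^{zt}z^{\alpha}(z^\alpha+\mathcal L)^{-1}\,dz$ and $\mathcal R'(t)=\frac{1}{2\pi\mathbf i}\int_{\Gamma_{\theta,1/t}}e^{zt}(z^\alpha+\mathcal L)^{-1}\,dz$, and the routine ray/arc estimates produce $\|E'(t)\|\le Ct^{-1}$ and $\|\mathcal R'(t)\|\le Ct^{\alpha-1}$, while \eqref{eqoptEest} already supplies $\|F(t)\|\le Ct^{\alpha-1}$. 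The first four terms above are then controlled exactly by $Ct^{-1}\|G_0\|+Ct^{\alpha-1}\|f(0)\|+C\int_0^t(t-s)^{\alpha-1}\|f'(s)\|\,ds$, after absorbing $t^{\alpha-1}\|G_0\|\le CT^{\alpha}t^{-1}\|G_0\|$ on the bounded interval $(0,T]$.

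The hard part is the memory term $\tfrac12\int_0^tF(t-s)G'(s)\,ds$, and here a naive singular Gr\"onwall inequality in $\|G'\|$ will fail: inserting the anticipated bound $\|G'(s)\|\sim s^{-1}\|G_0\|$ produces $\int_0^t(t-s)^{\alpha-1}s^{-1}\,ds=\infty$, so passing to norms inside the convolution is too wasteful for the $G_0$-contribution. I would instead estimate that contribution at the operator level. Iterating the displayed identity gives the Neumann series $G'=\sum_{k\ge0}\mathcal K^k\Phi$, where $(\mathcal K\psi)(t)=\tfrac12\int_0^tF(t-s)\psi(s)\,ds$ has Laplace symbol $\tfrac12(z^\alpha+\mathcal L)^{-1}$ and $\Phi$ is the sum of the four non-memory terms. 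Using $\widetilde{E'}(z)=z^{\alpha}(z^\alpha+\mathcal L)^{-1}-I=-\mathcal L(z^\alpha+\mathcal L)^{-1}$ together with $\|\mathcal L(z^\alpha+\mathcal L)^{-1}\|=\|I-z^{\alpha}(z^\alpha+\mathcal L)^{-1}\|\le C$, the $G_0$-part of the $k$-th term ($k\ge1$) has symbol $-2^{-k}\mathcal L(z^\alpha+\mathcal L)^{-(k+1)}$ of norm $\le C\,2^{-k}|z|^{-k\alpha}$, whose inverse transform is bounded by $C(C/2)^k\Gamma(k\alpha)^{-1}t^{k\alpha-1}\|G_0\|$; since $t\le T$ and $\Gamma(k\alpha)$ grows super-exponentially, the series over $k\ge1$ converges and sums to $\le Ct^{\alpha-1}\|G_0\|\le Ct^{-1}\|G_0\|$, while the $k=0$ term is exactly $E'(t)G_0+\tfrac12F(t)G_0$. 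The $f(0)$- and $f'$-parts are handled the same way but are easier, since $\widetilde{\mathcal R'}(z)=(z^\alpha+\mathcal L)^{-1}$ makes the $k$-th symbols $2^{-k}(z^\alpha+\mathcal L)^{-(k+1)}$ sum to the locally integrable kernels $t^{\alpha-1}$ and $(t-s)^{\alpha-1}$; for these two groups a generalized Gr\"onwall inequality would in fact suffice, because their contributions are integrable. Adding the three groups yields the claimed estimate.
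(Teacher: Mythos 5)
Your overall strategy---reduce everything to the resolvent estimate of Lemma \ref{lemresest} on a $t$-adapted contour, and treat the memory term by iterating the fixed-point identity at the symbol level because $\|G'(s)\|\sim s^{-1}$ is not integrable---is sound in spirit and, up to the memory term, reproduces the paper's estimates for the $G_0$-, $f(0)$- and $f'$-contributions. But the way you close the memory term has a genuine gap. Your convergence of the Neumann series for the $G_0$-part rests on the claim that the inverse Laplace transform of the symbol $2^{-k}\mathcal L(z^{\alpha}+\mathcal L)^{-(k+1)}$ is bounded by $C(C/2)^{k}\Gamma(k\alpha)^{-1}t^{k\alpha-1}$. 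The factor $\Gamma(k\alpha)^{-1}$ is exactly what you need for summability, but it is not available from the estimate you are actually performing: once you bound $\|\mathcal L(z^{\alpha}+\mathcal L)^{-(k+1)}\|\le C^{k+1}|z|^{-k\alpha}$ and take absolute values inside the contour integral, you get only $\int_{\Gamma_{\theta,1/t}}|e^{zt}||z|^{-k\alpha}|dz|\le C_{1}t^{k\alpha-1}$ with $C_{1}$ independent of $k$; the $1/\Gamma(k\alpha)$ in the scalar formula for $z^{-k\alpha}$ comes from cancellation that is destroyed by taking norms. The series is then geometric with ratio of order $CT^{\alpha}/2$, which need not be less than one. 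Nor can you recover $\Gamma(k\alpha)^{-1}$ by writing the $k$-th term as the time-domain convolution $2^{-k}(F^{*k}*E')(t)G_{0}$ and using $\|F^{*k}(r)\|\le (C\Gamma(\alpha))^{k}\Gamma(k\alpha)^{-1}r^{k\alpha-1}$, because that puts you back to $\int_{0}^{t}(t-s)^{k\alpha-1}s^{-1}ds=\infty$ --- the very divergence you correctly identified and set out to avoid. So the two requirements of your argument (stay at the operator level to dodge the $s^{-1}$ singularity, and gain factorial decay for summability) are in tension, and the proposal does not resolve it.

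The paper sidesteps this entirely by iterating only finitely many times: with $m=\lfloor 1/\alpha\rfloor$ it keeps the terms $\bigl(\tfrac12\tilde F\bigr)^{k}\bigl(\tilde EG_{0}+\tilde{\mathcal R}f(0)+\tilde{\mathcal R}\tilde{f'}\bigr)$ for $k=0,\dots,m$ (a finite sum, so no convergence issue and each term is estimated exactly as you do), and writes the remainder as $\bigl(\tfrac12\tilde F\bigr)^{m+1}\tilde G$. Since $(m+1)\alpha>1$, differentiating this remainder produces the kernel $(t-s)^{(m+1)\alpha-2}$, which is integrable, and the remainder is then controlled by the already-established bound on $\|G(s)\|_{L^{2}(\Omega)}$ from Theorem \ref{thmspareg} rather than by any bound on $\|G'\|$. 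If you want to keep your structure, the minimal repair is the same move: truncate your Neumann series after $\lceil 1/\alpha\rceil$ terms and absorb the tail using Theorem \ref{thmspareg} applied to $G$ itself, instead of summing an infinite series whose convergence you cannot guarantee.
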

\begin{proof}
	Let $\left \lfloor\frac{1}{\alpha}\right \rfloor$ be the largest integer smaller than $\frac{1}{\alpha}$. Choosing $m=\left \lfloor\frac{1}{\alpha}\right \rfloor$ and using \eqref{eqsollap2}, we have
	\begin{equation*}
		\begin{aligned}
			&\left\|\lim_{\tau\rightarrow 0}\frac{G(t)-G(t-\tau)}{\tau}\right\|_{L^{2}(\Omega)}\\
			\leq&\sum_{k=0}^{m}C\left\|\lim_{\tau\rightarrow 0}\frac{\int_{\Gamma_{\theta,\kappa}}e^{zt}(1-e^{-z\tau})\left(\frac{1}{2}\tilde{F}\right)^{k}\tilde{E}dz}{\tau}G_{0}\right \|_{L^{2}(\Omega)}\\
			&+\sum_{k=0}^{m}C\left\|\lim_{\tau\rightarrow 0}\frac{\int_{\Gamma_{\theta,\kappa}}e^{zt}(1-e^{-z\tau})\left(\frac{1}{2}\tilde{F}\right)^{k}\tilde{\mathcal{R}}dz}{\tau}f(0)\right \|_{L^{2}(\Omega)}\\
			&+\sum_{k=0}^{m}C\left\|\lim_{\tau\rightarrow 0}\frac{\int_{\Gamma_{\theta,\kappa}}e^{zt}(1-e^{-z\tau})\left(\frac{1}{2}\tilde{F}\right)^{k}\tilde{\mathcal{R}}\tilde{f'}dz}{\tau}\right \|_{L^{2}(\Omega)}\\
			&+C\left\|\lim_{\tau\rightarrow 0}\frac{\int_{\Gamma_{\theta,\kappa}}e^{zt}(1-e^{-z\tau})\left(\frac{1}{2}\tilde{F}\right)^{m+1}\tilde{G}dz}{\tau}\right \|_{L^{2}(\Omega)}\\
			\leq&\sum_{k=0}^{m}\uppercase\expandafter{\romannumeral1}_{k}+\sum_{k=0}^{m}\uppercase\expandafter{\romannumeral2}_{k}+\sum_{k=0}^{m}\uppercase\expandafter{\romannumeral3}_{k}+\uppercase\expandafter{\romannumeral4}.
		\end{aligned}
	\end{equation*}
	As for $\uppercase\expandafter{\romannumeral1}_{k}$, Lemma \ref{lemresest}, the definitions of operators $E$ and $F$, and the fact $\left|\frac{1-e^{-z\tau}}{\tau}\right|\leq C|z|$ for $z\in\Gamma_{\theta,\kappa}$ \citep[see, e.g.,][]{Lubich.1996Ndeefaoaeewaptmt} give
	\begin{equation*}
		\begin{aligned}
			\uppercase\expandafter{\romannumeral1}_{k}\leq&C\int_{\Gamma_{\theta,\kappa}}|e^{zt}||z||z|^{-\alpha k}|z|^{-1}|dz|\|G_{0}\|_{L^2(\Omega)}\\
			\leq&Ct^{\alpha k-1}\|G_{0}\|_{L^2(\Omega)}\\
			\leq& Ct^{-1}\|G_{0}\|_{L^2(\Omega)}.
		\end{aligned}
	\end{equation*}
	Similarly, one can get
	\begin{equation*}
		\begin{aligned}
			&\uppercase\expandafter{\romannumeral2}_{k}\leq Ct^{\alpha-1}\|f(0)\|_{L^2(\Omega)},\\
			&\uppercase\expandafter{\romannumeral3}_{k}\leq C\int_{0}^{t}(t-s)^{\alpha-1}\|f'(s)\|_{L^2(\Omega)}ds.	
		\end{aligned}
	\end{equation*}
	As for $\uppercase\expandafter{\romannumeral4}$, using $m=\left \lfloor\frac{1}{\alpha}\right \rfloor$ and Theorem \ref{thmspareg}, we obtain
	\begin{equation*}
		\begin{aligned}
			\uppercase\expandafter{\romannumeral4}\leq&C\int_{0}^{t}\int_{\Gamma_{\theta,\kappa}}|e^{z(t-s)}||z||z|^{-(m+1)\alpha}|dz|\left\|G(s)\right \|_{L^{2}(\Omega)}ds\\
			\leq& C\int_{0}^{t}(t-s)^{(m+1)\alpha-2}\left\|G(s)\right \|_{L^{2}(\Omega)}ds\\
			\leq&C\|G_{0}\|_{L^{2}(\Omega)}+C\|f(0)\|_{L^{2}(\Omega)}+C\int_{0}^{t}\|f'(s)\|_{L^{2}(\Omega)}ds.
		\end{aligned}
	\end{equation*}
	Collecting the above estimates and using $T/t\geq 1$ lead to the desired result.  %the desired result is achieved.
\end{proof}

\section{Temporal semi-discrete scheme and error analysis}
In this section, we consider the temporal semi-discrete scheme of \eqref{eqretosol1} and provide the corresponding error estimate. Let $\tau=T/L$  ($L\in\mathbb{N}^{*}$) and $t_{i}=i\tau$, $i=1,2,\ldots,L$. Here we use backward Euler convolution quadrature \citep[see, e.g.,][]{Lubich.1988CqadocI,Lubich.1988CqadocIb,Lubich.1996Ndeefaoaeewaptmt} to discretize temporal operator, i.e., the temporal semi-discrete scheme can be written as: Find $G^{n}$ such that
\begin{equation}\label{eqsemidissch}
	\left\{
	\begin{aligned}
		&\sum_{j=0}^{n-1}d^{(\alpha)}_{j}(G^{n-j}-G_{0})+\mathcal{L}G^{n}=f^{n}+\frac{1}{2}G^{n},\\
		&G^{n}(x,0)=G^{n}(x,1)=G^{n}(0,v)=0,\\
		&G^{0}=G_{0},
	\end{aligned}
	\right.
\end{equation}
where  $G^{n}$ is the numerical solution of $G(x,v,t_{n})$, $f^{n}=f(t_{n})$, and
\begin{equation}\label{eqdefdja}
	\sum_{j=0}^{\infty}d^{(\alpha)}_{j}\zeta^{j}=(\delta_{\tau}(\zeta))^{\alpha}=\left(\frac{1-\zeta}{\tau}\right )^{\alpha}.
\end{equation}
It's easy to know that $d^{\alpha}_{j}$ have the following properties.
\begin{lemma}\citep[see, e.g.,][]{Chen.2009FdaftfFPe,Deng.2015NAftFaBFFKE}\label{lempropb}
	Let $\{d^{(\alpha)}_{i}\}_{i=0}^{\infty}$ be defined in \eqref{eqdefdja} with $\alpha\in(0,1)$. Then we have
	\begin{equation*}
		\begin{aligned}
			&d^{(\alpha)}_{0}>0;\qquad d^{(\alpha)}_{i}<0,\quad i\geq 1;\\
			&\sum_{i=0}^{\infty}d^{(\alpha)}_{i}=0.
		\end{aligned}
	\end{equation*}
\end{lemma}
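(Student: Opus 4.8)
The plan is to extract a closed-form expression for the coefficients from the generating function $\tau^{-\alpha}(1-\zeta)^{\alpha}$ and then read off the three assertions one by one. First I would apply the generalized binomial theorem to write $(1-\zeta)^{\alpha}=\sum_{j=0}^{\infty}\binom{\alpha}{j}(-\zeta)^{j}$ for $|\zeta|<1$, so that matching coefficients in \eqref{eqdefdja} yields
\begin{equation*}
	d^{(\alpha)}_{j}=\tau^{-\alpha}(-1)^{j}\binom{\alpha}{j}=\tau^{-\alpha}(-1)^{j}\frac{\alpha(\alpha-1)\cdots(\alpha-j+1)}{j!}.
\end{equation*}
Since $\binom{\alpha}{0}=1$, the first property $d^{(\alpha)}_{0}=\tau^{-\alpha}>0$ is then immediate.

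For the sign of $d^{(\alpha)}_{j}$ with $j\geq 1$, I would set $a_{j}=(-1)^{j}\binom{\alpha}{j}$ and exploit the ratio
\begin{equation*}
	\frac{a_{j}}{a_{j-1}}=-\frac{\alpha-j+1}{j}=\frac{j-1-\alpha}{j},
\end{equation*}
which is strictly positive for every $j\geq 2$ because $\alpha\in(0,1)$ forces $j-1-\alpha\geq 1-\alpha>0$. As $a_{1}=-\alpha<0$, a short induction on $j$ then gives $a_{j}<0$ for all $j\geq 1$, and hence $d^{(\alpha)}_{j}=\tau^{-\alpha}a_{j}<0$. Equivalently, one observes that the product $\alpha(\alpha-1)\cdots(\alpha-j+1)$ has exactly one positive factor and $j-1$ negative ones, so its sign is $(-1)^{j-1}$, cancelling against $(-1)^{j}$ to leave a negative number.

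The remaining identity $\sum_{i=0}^{\infty}d^{(\alpha)}_{i}=0$ I would obtain by evaluating the generating function at $\zeta=1$, since $\tau^{-\alpha}(1-1)^{\alpha}=0$. The only genuinely delicate point, and the step I expect to be the main obstacle, is justifying that the power series may be evaluated on the boundary of its disk of convergence. For this I would invoke the asymptotics $|d^{(\alpha)}_{j}|=\mathcal{O}(j^{-\alpha-1})$ as $j\to\infty$, which follows from the standard estimate $\binom{\alpha}{j}(-1)^{j}\sim j^{-\alpha-1}/\Gamma(-\alpha)$; because $\alpha+1>1$, this shows $\sum_{j}|d^{(\alpha)}_{j}|<\infty$. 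Absolute convergence at $\zeta=1$ together with Abel's theorem then identifies the series value with the continuous boundary limit $\lim_{\zeta\to 1^{-}}\tau^{-\alpha}(1-\zeta)^{\alpha}=0$, which completes the proof.
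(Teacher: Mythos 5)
Your proof is correct. Note that the paper does not actually supply an internal proof of this lemma: it is stated with a citation to Chen (2009) and Deng (2015) and the properties are simply quoted, so there is nothing to compare against line by line. Your derivation via the generalized binomial expansion of $\tau^{-\alpha}(1-\zeta)^{\alpha}$ is the standard argument underlying those references: the closed form $d^{(\alpha)}_{j}=\tau^{-\alpha}(-1)^{j}\frac{\alpha(\alpha-1)\cdots(\alpha-j+1)}{j!}$ gives $d^{(\alpha)}_{0}=\tau^{-\alpha}>0$ at once, and the sign count (one positive factor $\alpha$ and $j-1$ negative factors) or, equivalently, your ratio argument settles $d^{(\alpha)}_{j}<0$ for $j\geq 1$. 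You are also right that the only delicate point is evaluating the series at $\zeta=1$, and your justification via $|d^{(\alpha)}_{j}|=\mathcal{O}(j^{-\alpha-1})$ plus absolute convergence is sound. If you want to avoid the Stirling-type asymptotic for the general binomial coefficient, a more elementary route is to telescope the partial sums with Pascal's rule, which gives $\sum_{j=0}^{N}d^{(\alpha)}_{j}=\tau^{-\alpha}\prod_{k=1}^{N}\bigl(1-\frac{\alpha}{k}\bigr)$; this product tends to $0$ because $\sum_{k}\alpha/k$ diverges, and since the tail terms are all negative the limit of the partial sums is the value of the series. Either way the three assertions follow.
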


To obtain the temporal error estimate, we first derive the solution of semi-discrete scheme \eqref{eqsemidissch}. Multiplying $\zeta^{n}$ on both sides of \eqref{eqsemidissch} and summing $n$ from $1$ to $\infty$, we have
\begin{equation*}
	\sum_{n=1}^{\infty}\sum_{j=0}^{n-1}d^{(\alpha)}_{j}(G^{n-j}-G_{0})\zeta^{n}+\sum_{n=1}^{\infty}\mathcal{L}G^{n}\zeta^{n}=\sum_{n=1}^{\infty}f^{n}\zeta^{n}+\sum_{n=1}^{\infty}\frac{1}{2}G^{n}\zeta^{n}.
\end{equation*}
Using the fact $f(t)=f(0)+\int_{0}^{t}f'(s)ds$ and introducing $R(t)=\int_{0}^{t}f'(s)ds$ lead to
\begin{equation}\label{eqtimsollap}
	\begin{aligned}
		\sum_{n=1}^{\infty}G^{n}\zeta^{n}=&((	\delta_{\tau}(\zeta))^{\alpha}+\mathcal{L})^{-1}\sum_{n=1}^{\infty}f^{0}\zeta^{n}+((	\delta_{\tau}(\zeta))^{\alpha}+\mathcal{L})^{-1}\sum_{n=1}^{\infty}R(t_{n})\zeta^{n}\\
		&+((	\delta_{\tau}(\zeta))^{\alpha}+\mathcal{L})^{-1}(	\delta_{\tau}(\zeta))^{\alpha}\sum_{n=1}^{\infty}G^{0}\zeta^{n}+\frac{1}{2}((\delta_{\tau}(\zeta))^{\alpha}+\mathcal{L})^{-1}\sum_{n=1}^{\infty}G^{n}\zeta^{n}.
	\end{aligned}
\end{equation}
Iterating \eqref{eqtimsollap} $m$ $(m\in\mathbb{N})$ times, one has
\begin{equation}\label{eqtimsollap1}
	\begin{aligned}
		\sum_{n=1}^{\infty}G^{n}\zeta^{n}=&\sum_{k=0}^{m}\left(\frac{1}{2}((	\delta_{\tau}(\zeta))^{\alpha}+\mathcal{L})^{-1}\right )^{k}((	\delta_{\tau}(\zeta))^{\alpha}+\mathcal{L})^{-1}\sum_{n=1}^{\infty}f^{0}\zeta^{n}\\
		&+\sum_{k=0}^{m}\left(\frac{1}{2}((	\delta_{\tau}(\zeta))^{\alpha}+\mathcal{L})^{-1}\right )^{k}((	\delta_{\tau}(\zeta))^{\alpha}+\mathcal{L})^{-1}\sum_{n=1}^{\infty}R(t_{n})\zeta^{n}\\
		&+\sum_{k=0}^{m}\left(\frac{1}{2}((	\delta_{\tau}(\zeta))^{\alpha}+\mathcal{L})^{-1}\right )^{k}((	\delta_{\tau}(\zeta))^{\alpha}+\mathcal{L})^{-1}(	\delta_{\tau}(\zeta))^{\alpha}\sum_{n=1}^{\infty}G^{0}\zeta^{n}\\
		&+\left(\frac{1}{2}((	\delta_{\tau}(\zeta))^{\alpha}+\mathcal{L})^{-1}\right )^{m+1}\sum_{n=1}^{\infty}G^{n}\zeta^{n}.
	\end{aligned}
\end{equation}
Thus there holds
\begin{equation*}
	\begin{aligned}
		G^{n}=&\sum_{k=0}^{m}\frac{1}{2\pi\mathbf{i}\tau}\int_{|\zeta|=\xi_{\tau}}\zeta^{-n-1}\left(\frac{1}{2}((	\delta_{\tau}(\zeta))^{\alpha}+\mathcal{L})^{-1}\right )^{k}((	\delta_{\tau}(\zeta))^{\alpha}+\mathcal{L})^{-1}(\delta_{\tau}(\zeta))^{-1}\zeta d\zeta f(0)\\
		&+\sum_{k=0}^{m}\frac{1}{2\pi\mathbf{i}}\int_{|\zeta|=\xi_{\tau}}\zeta^{-n-1}\left(\frac{1}{2}((	\delta_{\tau}(\zeta))^{\alpha}+\mathcal{L})^{-1}\right )^{k}((	\delta_{\tau}(\zeta))^{\alpha}+\mathcal{L})^{-1}\sum_{j=1}^{\infty}R(t_{j})\zeta^{j}d\zeta \\
		&+\sum_{k=0}^{m}\frac{1}{2\pi\mathbf{i}\tau}\int_{|\zeta|=\xi_{\tau}}\zeta^{-n-1}\left(\frac{1}{2}((	\delta_{\tau}(\zeta))^{\alpha}+\mathcal{L})^{-1}\right )^{k}((	\delta_{\tau}(\zeta))^{\alpha}+\mathcal{L})^{-1}(	\delta_{\tau}(\zeta))^{\alpha-1}G^{0}\zeta d\zeta\\
		&+\frac{1}{2\pi\mathbf{i}}\int_{|\zeta|=\xi_{\tau}}\zeta^{-n-1}\left(\frac{1}{2}((	\delta_{\tau}(\zeta))^{\alpha}+\mathcal{L})^{-1}\right )^{m+1}\sum_{j=1}^{\infty}G^{j}\zeta^{j}d\zeta.
	\end{aligned}
\end{equation*}
Taking $\xi_{\tau}=e^{-(\kappa+1)\tau}$, using Cauchy's integral theorem, and introducing $\Gamma^{\tau}_{\theta,\kappa}=\{z\in\mathbb{C}:~\kappa\leq |z|\leq \frac{\pi}{\tau\sin(\theta)},~|\arg z|=\theta\}\cup\{z\in\mathbb{ C}:~|z|=\kappa,~|\arg z|\leq \theta\}$, one can get
\begin{equation}\label{eqtimschsol}
	\begin{aligned}
		G^{n}=&\sum_{k=0}^{m}\frac{1}{2\pi\mathbf{i}}\int_{\Gamma^{\tau}_{\theta,\kappa}}e^{zt_{n-1}}\left(\frac{1}{2}((	\delta_{\tau}(e^{-z\tau}))^{\alpha}+\mathcal{L})^{-1}\right )^{k}((	\delta_{\tau}(e^{-z\tau}))^{\alpha}+\mathcal{L})^{-1}(\delta_{\tau}(e^{-z\tau}))^{-1}dz f(0)\\
		&+\sum_{k=0}^{m}\frac{1}{2\pi\mathbf{i}}\int_{\Gamma^{\tau}_{\theta,\kappa}}e^{zt_{n}}\left(\frac{1}{2}((	\delta_{\tau}(e^{-z\tau}))^{\alpha}+\mathcal{L})^{-1}\right )^{k}((	\delta_{\tau}(e^{-z\tau}))^{\alpha}+\mathcal{L})^{-1}\tau\sum_{j=1}^{\infty}R(t_{j})e^{-zt_{j}}dz \\
		&+\sum_{k=0}^{m}\frac{1}{2\pi\mathbf{i}}\int_{\Gamma^{\tau}_{\theta,\kappa}}e^{zt_{n-1}}\left(\frac{1}{2}((	\delta_{\tau}(e^{-z\tau}))^{\alpha}+\mathcal{L})^{-1}\right )^{k}((	\delta_{\tau}(e^{-z\tau}))^{\alpha}+\mathcal{L})^{-1}(	\delta_{\tau}(e^{-z\tau}))^{\alpha-1}G^{0}dz\\
		&+\frac{1}{2\pi\mathbf{i}}\int_{\Gamma^{\tau}_{\theta,\kappa}}e^{zt_{n}}\left(\frac{1}{2}((	\delta_{\tau}(e^{-z\tau}))^{\alpha}+\mathcal{L})^{-1}\right )^{m+1}\tau\sum_{j=1}^{\infty}G^{j}e^{-zt_{j}}dz.
	\end{aligned}
\end{equation}

\begin{theorem}\label{thmsemierr}
	Let $G$ and $G^{n}$ be the solutions of \eqref{eqretosol1} and \eqref{eqsemidissch}, respectively. Assume $G_{0}$, $f(0)\in L^{2}(\Omega)$, and $\int_{0}^{t}(t-s)^{\alpha-1}\|f'(s)\|_{L^{2}(\Omega)}ds<\infty$. Then one has
	\begin{equation*}
		\|G(t_{n})-G^{n}\|_{L^{2}(\Omega)}\leq C\tau\left (t_{n}^{-1}\ln(n)\|G_{0}\|_{L^{2}(\Omega)}+t_{n}^{\alpha-1}\|f(0)\|_{L^{2}(\Omega)}+\int_{0}^{t_{n}}(t_{n}-s)^{\alpha-1}\|f'(s)\|_{L^{2}(\Omega)}ds\right ).
	\end{equation*}
\end{theorem}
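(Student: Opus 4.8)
The plan is to compare the continuous solution $G(t_{n})$, obtained by applying the inverse Laplace transform to \eqref{eqsollap2} with the choice $m=\lfloor 1/\alpha\rfloor$ and integrating along $\Gamma_{\theta,\kappa}$, with the semidiscrete solution $G^{n}$ given by the contour representation \eqref{eqtimschsol} on the truncated contour $\Gamma^{\tau}_{\theta,\kappa}$. Subtracting the two representations term by term produces, for each $k=0,\dots,m$, a difference of the data kernels acting on $G_{0}$, $f(0)$, and $R(t_{j})$, together with one remainder involving the $(m+1)$-st power of the resolvent. The two workhorses will be (i) deforming the continuous contour onto $\Gamma^{\tau}_{\theta,\kappa}$ and discarding the tail $|z|>\pi/(\tau\sin\theta)$, whose contribution is negligible because of the decay of $|e^{zt_{n}}|$, and (ii) the backward Euler convolution-quadrature symbol estimates $\delta_{\tau}(e^{-z\tau})\in\Sigma_{\theta'}$, $c|z|\le|\delta_{\tau}(e^{-z\tau})|\le C|z|$, and $|\delta_{\tau}(e^{-z\tau})-z|\le C\tau|z|^{2}$ on $\Gamma^{\tau}_{\theta,\kappa}$. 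The latter, combined with Lemma \ref{lemresest} applied with $z^{\alpha}$ replaced by $(\delta_{\tau}(e^{-z\tau}))^{\alpha}$, will let me bound each kernel difference by $C\tau|z|^{\beta}$ for the appropriate exponent $\beta$.

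For the three groups of data terms I would estimate the kernel differences and then use the standard bound $\int_{\Gamma^{\tau}_{\theta,\kappa}}|e^{zt_{n}}|\,|z|^{\beta}\,|dz|\le Ct_{n}^{-\beta-1}$ after taking $\kappa=1/t_{n}$. Tracking the exponents exactly as in the proof of Theorem \ref{thmholdere}, the $f(0)$ and $f'$ kernels are genuinely integrable and yield the contributions $C\tau t_{n}^{\alpha-1}\|f(0)\|_{L^{2}(\Omega)}$ and $C\tau\int_{0}^{t_{n}}(t_{n}-s)^{\alpha-1}\|f'(s)\|_{L^{2}(\Omega)}ds$, the latter after rewriting $\tau\sum_{j}R(t_{j})e^{-zt_{j}}$ as a quadrature of $\tilde{R}$ and controlling the quadrature error. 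The $G_{0}$ term is borderline: its error kernel behaves like $|z|^{-1}$, so the radial integral over $\Gamma^{\tau}_{\theta,\kappa}$ produces $\int_{1/t_{n}}^{c/\tau}r^{-1}dr=\ln(t_{n}/\tau)\sim\ln n$, which is exactly the source of the $t_{n}^{-1}\ln(n)$ weight on $\|G_{0}\|_{L^{2}(\Omega)}$.

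The delicate step is the remainder term, since each remainder is only $O(1)$ in $\tau$ and the gain must come from the difference of the continuous and discrete remainders. I would split this difference as (a) the convolution-quadrature error of the $(m+1)$-fold kernel acting on the fixed function $G$, which is $O(\tau)$ and is bounded via Theorem \ref{thmspareg}, plus (b) the discrete $(m+1)$-fold kernel acting on the error $G(t_{j})-G^{j}$ itself. Because $m=\lfloor 1/\alpha\rfloor$ forces $(m+1)\alpha>1$, the remainder kernel behaves like $(t_{n}-t_{j})^{(m+1)\alpha-1}$ with positive exponent, hence is bounded and summable, and a discrete Grönwall argument then absorbs (b) and closes the estimate, leaving only the $O(\tau)$ data contributions. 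I expect the main obstacles to be verifying that the resolvent bound transfers uniformly to $(\delta_{\tau}(e^{-z\tau}))^{\alpha}$ over the whole truncated contour and making the borderline $G_{0}$ estimate, together with its $\ln n$ factor, sharp rather than off by a power of $t_{n}$.
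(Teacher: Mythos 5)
Your overall architecture matches the paper's: compare the contour representation of $G(t_{n})$ obtained from \eqref{eqsollap2} with $m=\lfloor 1/\alpha\rfloor$ against \eqref{eqtimschsol}, estimate the data-kernel differences using $|\delta_{\tau}(e^{-z\tau})-z|\le C\tau|z|^{2}$ and the resolvent bound, and absorb the discrete remainder acting on $G(t_{j})-G^{j}$ by a discrete Gr\"onwall argument; the $f(0)$ and $f'$ terms are handled essentially as you describe. However, your accounting of the $G_{0}$ contribution is wrong. The error kernel for that term is $O(\tau|z|^{-k\alpha})$, and on $\Gamma^{\tau}_{\theta,\kappa}$ the factor $|e^{zt_{n}}|$ decays exponentially along the rays, so $\int_{\Gamma^{\tau}_{\theta,\kappa}}|e^{zt_{n}}|\,|z|^{-k\alpha}\,|dz|\le Ct_{n}^{k\alpha-1}\le Ct_{n}^{-1}$ with no logarithm; your claimed divergent integral $\int_{1/t_{n}}^{c/\tau}r^{-1}\,dr$ ignores the exponential weight (substituting $u=rt_{n}$ gives a convergent integral bounded independently of $\tau$). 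The paper accordingly obtains $\mathrm{III}_{k}\le C\tau t_{n}^{-1}\|G_{0}\|_{L^{2}(\Omega)}$ outright, with no $\ln n$.

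The genuine gap is in the remainder. The difference between the continuous convolution $\int_{t_{j-1}}^{t_{j}}(\cdots)G(s)\,ds$ and its discrete counterpart acting on $G(t_{j})$ cannot be made $O(\tau)$ knowing only $\|G(s)\|_{L^{2}(\Omega)}\le C$ from Theorem \ref{thmspareg}: the piece involving $G(s)-G(t_{j})$ requires the derivative bound of Theorem \ref{thmholdere}, $\|G'(t)\|_{L^{2}(\Omega)}\le Ct^{-1}\|G_{0}\|_{L^{2}(\Omega)}+\cdots$, and this is precisely where the logarithm actually enters, via $\sum_{j\ge 2}\int_{t_{j-1}}^{t_{j}}\int_{s}^{t_{j}}\|G'(r)\|_{L^{2}(\Omega)}\,dr\,ds\le C\tau^{2}\ln(n)\|G_{0}\|_{L^{2}(\Omega)}+\cdots$, while the first subinterval $[0,t_{1}]$, on which $G'$ is not integrable, must be split off and handled using the boundedness of $G$ alone. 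Without invoking Theorem \ref{thmholdere} and making this splitting, your part (a) is only $O(1)$, the estimate does not close, and the $t_{n}^{-1}\ln(n)$ weight on $\|G_{0}\|_{L^{2}(\Omega)}$ is left unjustified (indeed, mislocated in a term that does not produce it).
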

\begin{proof}
	According to \eqref{eqsollap2}, \eqref{eqtimschsol}, and the definition of $R(t)$, there exists 
	\begin{equation*}
		\begin{aligned}
			&\|G(t_{n})-G^{n}\|_{L^{2}(\Omega)}\\
			\leq&C\sum_{k=0}^{m}\Bigg \|\int_{\Gamma_{\theta,\kappa}}e^{zt_{n}}\left(\frac{1}{2}(z^{\alpha}+\mathcal{L})^{-1}\right )^{k}(z^{\alpha}+\mathcal{L})^{-1}z^{-1}dzf(0)\\
			&\quad-\int_{\Gamma^{\tau}_{\theta,\kappa}}e^{zt_{n-1}}\left(\frac{1}{2}((	\delta_{\tau}(e^{-z\tau}))^{\alpha}+\mathcal{L})^{-1}\right )^{k}((	\delta_{\tau}(e^{-z\tau}))^{\alpha}+\mathcal{L})^{-1}(\delta_{\tau}(e^{-z\tau}))^{-1}dz f(0)\Bigg \|_{L^{2}(\Omega)}\\
			&+C\sum_{k=0}^{m}\Bigg \|\int_{\Gamma_{\theta,\kappa}}e^{zt_{n}}\left(\frac{1}{2}(z^{\alpha}+\mathcal{L})^{-1}\right )^{k}(z^{\alpha}+\mathcal{L})^{-1}\tilde{R}dz\\
			&\quad-\int_{\Gamma^{\tau}_{\theta,\kappa}}e^{zt_{n}}\left(\frac{1}{2}((	\delta_{\tau}(e^{-z\tau}))^{\alpha}+\mathcal{L})^{-1}\right )^{k}((	\delta_{\tau}(e^{-z\tau}))^{\alpha}+\mathcal{L})^{-1}\tau\sum_{j=1}^{\infty}R(t_{j})e^{-zt_{j}}dz\Bigg \|_{L^{2}(\Omega)}\\
			&+C\sum_{k=0}^{m}\Bigg \|\int_{\Gamma_{\theta,\kappa}}e^{zt_{n}}\left(\frac{1}{2}(z^{\alpha}+\mathcal{L})^{-1}\right )^{k}(z^{\alpha}+\mathcal{L})^{-1}z^{\alpha-1}G^{0}dz\\
			&\quad-\int_{\Gamma^{\tau}_{\theta,\kappa}}e^{zt_{n-1}}\left(\frac{1}{2}((	\delta_{\tau}(e^{-z\tau}))^{\alpha}+\mathcal{L})^{-1}\right )^{k}((	\delta_{\tau}(e^{-z\tau}))^{\alpha}+\mathcal{L})^{-1}(	\delta_{\tau}(e^{-z\tau}))^{\alpha-1}G^{0}dz\Bigg \|_{L^{2}(\Omega)}\\
			&+C\Bigg \|\frac{1}{2\pi\mathbf{i}}\int_{\Gamma_{\theta,\kappa}}e^{zt_{n}}\left(\frac{1}{2}(z^{\alpha}
+\mathcal{L})^{-1}\right )^{m+1}\tilde{G}dz-\frac{1}{2\pi\mathbf{i}}\int_{\Gamma^{\tau}_{\theta,\kappa}}e^{zt_{n}}\left(\frac{1}{2}((	\delta_{\tau}(e^{-z\tau}))^{\alpha} +\mathcal{L})^{-1}\right )^{m+1}\tau\sum_{j=1}^{\infty}G^{j}e^{-zt_{j}}dz\Bigg \|_{L^{2}(\Omega)}\\			\leq&\sum_{k=0}^{m}\uppercase\expandafter{\romannumeral1}_{k}+\sum_{k=0}^{m}\uppercase\expandafter{\romannumeral2}_{k}+\sum_{k=0}^{m}\uppercase\expandafter{\romannumeral3}_{k}+\uppercase\expandafter{\romannumeral4}.
		\end{aligned}
	\end{equation*}
	For $\uppercase\expandafter{\romannumeral1}_{k}$, it can be split into three parts, i.e.,
	\begin{equation*}
		\begin{aligned}
			\uppercase\expandafter{\romannumeral1}_{k}\leq&C\Bigg \|\int_{\Gamma_{\theta,\kappa}\backslash\Gamma^{\tau}_{\theta,\kappa}}e^{zt_{n}}\left(\frac{1}{2}(z^{\alpha}+\mathcal{L})^{-1}\right )^{k}(z^{\alpha}+\mathcal{L})^{-1}z^{-1}dz\Bigg \|\|f(0)\|_{L^2(\Omega)}\\
			&+C\Bigg \|\int_{\Gamma^{\tau}_{\theta,\kappa}}(e^{zt_{n}}-e^{zt_{n-1}})\left(\frac{1}{2}(z^{\alpha}+\mathcal{L})^{-1}\right )^{k}(z^{\alpha}+\mathcal{L})^{-1}z^{-1}dz\Bigg\|\| f(0) \|_{L^{2}(\Omega)}\\
			&+C\Bigg \|\int_{\Gamma^{\tau}_{\theta,\kappa}}e^{zt_{n-1}}\Bigg(\left(\frac{1}{2}(z^{\alpha}+\mathcal{L})^{-1}\right )^{k}(z^{\alpha}+\mathcal{L})^{-1}z^{-1}\\
			-&\left(\frac{1}{2}((	\delta_{\tau}(e^{-z\tau}))^{\alpha}+\mathcal{L})^{-1}\right )^{k}((	\delta_{\tau}(e^{-z\tau}))^{\alpha}+\mathcal{L})^{-1}(\delta_{\tau}(e^{-z\tau}))^{-1}\Bigg)dz\Bigg\|\| f(0) \|_{L^{2}(\Omega)}\\
			\leq&\uppercase\expandafter{\romannumeral1}_{k,1}+\uppercase\expandafter{\romannumeral1}_{k,2}+\uppercase\expandafter{\romannumeral1}_{k,3}.
		\end{aligned}
	\end{equation*}
	Lemma \ref{lemresest} and simple calculations imply
	\begin{equation*}
		\begin{aligned}
			\uppercase\expandafter{\romannumeral1}_{k,1}\leq& C\tau\int_{\Gamma_{\theta,\kappa}\backslash\Gamma^{\tau}_{\theta,\kappa}}|e^{zt_{n}}|\left\|\frac{1}{2}(z^{\alpha}+\mathcal{L})^{-1}\right \|^{k}\|(z^{\alpha}+\mathcal{L})^{-1}\||dz|\|f(0)\|_{L^2(\Omega)}\\
			\leq& C\tau\int_{\Gamma_{\theta,\kappa}\backslash\Gamma^{\tau}_{\theta,\kappa}}|e^{zt_{n}}||z|^{-(k+1)\alpha}|dz|\|f(0)\|_{L^2(\Omega)}\\
			\leq&C\tau t_{n}^{\alpha-1}\|f(0)\|_{L^2(\Omega)}.
		\end{aligned}
	\end{equation*}
	According to the fact $|\frac{1-e^{-z\tau}}{\tau}|\leq C|z|$, one has
	\begin{equation*}
		\uppercase\expandafter{\romannumeral1}_{k,2}\leq C\tau t_{n}^{\alpha-1}\|f(0)\|_{L^2(\Omega)}.
	\end{equation*}
	Using the fact $|z-\delta_{\tau}(e^{-z\tau})|\leq C|z|^{2}\tau$ \citep[see, e.g.,][]{Jin.2016TFDSfFDaDWEwND,Jin.2017CoHOBCQfFEE,Lubich.1996Ndeefaoaeewaptmt} and doing simple calculations lead to
	\begin{equation*}
		\begin{aligned}
			&\Bigg\|\left(\frac{1}{2}(z^{\alpha}+\mathcal{L})^{-1}\right )^{k}(z^{\alpha}+\mathcal{L})^{-1}z^{-1}\\
			&\qquad-\left(\frac{1}{2}((	\delta_{\tau}(e^{-z\tau}))^{\alpha}+\mathcal{L})^{-1}\right )^{k}((	\delta_{\tau}(e^{-z\tau}))^{\alpha}+\mathcal{L})^{-1}(\delta_{\tau}(e^{-z\tau}))^{-1}\Bigg\|\leq C\tau|z|^{-(k+1)\alpha},
		\end{aligned}
	\end{equation*}
	which yields
	\begin{equation*}
		\uppercase\expandafter{\romannumeral1}_{k,3}\leq C\tau t_{n}^{\alpha-1}\|f(0)\|_{L^{2}(\Omega)}.
	\end{equation*}
	Similarly, one has
	\begin{equation*}
		\uppercase\expandafter{\romannumeral3}_{k}\leq C\tau t_{n}^{-1}\|G_{0}\|_{L^{2}(\Omega)}.
	\end{equation*}
	As for $\uppercase\expandafter{\romannumeral2}_{k}$, we split it into two parts, i.e.,
	\begin{equation*}
		\begin{aligned}
			\uppercase\expandafter{\romannumeral2}_{k}\leq&C\Bigg\|\int_{0}^{t_{n}}\Bigg(\int_{\Gamma_{\theta,\kappa}}e^{z(t_{n}-s)}\left(\frac{1}{2}(z^{\alpha}+\mathcal{L})^{-1}\right )^{k}(z^{\alpha}+\mathcal{L})^{-1}z^{-1}dz\\
			&\quad-\int_{\Gamma^{\tau}_{\theta,\kappa}}e^{z(t_{n}-s)}\left(\frac{1}{2}((	\delta_{\tau}(e^{-z\tau}))^{\alpha}+\mathcal{L})^{-1}\right )^{k}((	\delta_{\tau}(e^{-z\tau}))^{\alpha}+\mathcal{L})^{-1}(\delta_{\tau}(e^{-z\tau}))^{-1}dz\Bigg)f'(s)ds\Bigg\|_{L^2(\Omega)}\\
			&+C\Bigg\|\int_{0}^{t_{n}}\int_{\Gamma^{\tau}_{\theta,\kappa}}e^{z(t_{n}-s)}\left(\frac{1}{2}((	\delta_{\tau}(e^{-z\tau}))^{\alpha}+\mathcal{L})^{-1}\right )^{k}((	\delta_{\tau}(e^{-z\tau}))^{\alpha}+\mathcal{L})^{-1}(\delta_{\tau}(e^{-z\tau}))^{-1}dzf'(s)ds\\
			&\quad-\int_{\Gamma^{\tau}_{\theta,\kappa}}e^{zt_{n}}\left(\frac{1}{2}((	\delta_{\tau}(e^{-z\tau}))^{\alpha}+\mathcal{L})^{-1}\right )^{k}((	\delta_{\tau}(e^{-z\tau}))^{\alpha}+\mathcal{L})^{-1}\tau\sum_{j=1}^{\infty}R(t_{j})e^{-zt_{j}}dz\Bigg\|_{L^2(\Omega)}\\
			\leq&\uppercase\expandafter{\romannumeral2}_{k,1}+\uppercase\expandafter{\romannumeral2}_{k,2}.
		\end{aligned}
	\end{equation*}
	Similar to the derivation of $\uppercase\expandafter{\romannumeral1}_{k}$, one has
	\begin{equation*}
		\uppercase\expandafter{\romannumeral2}_{k,1}\leq C\tau\int_{0}^{t_{n}}(t_{n}-s)^{\alpha-1}\|f'(s)\|_{L^2(\Omega)}ds.
	\end{equation*}
	As for $\uppercase\expandafter{\romannumeral2}_{k,2}$, the following  fact
	\begin{equation*}
		\begin{aligned}
			\tau\sum_{n=1}^{\infty}R(t_{n})e^{-zt_{n}}=&\tau\sum_{n=1}^{\infty}\int_{0}^{t_{n}}f'(r)dre^{-zt_{n}}\\
			=&\tau\sum_{n=1}^{\infty}\sum_{j=1}^{n}\int_{t_{j-1}}^{t_{j}}f'(r)dre^{-zt_{n}}\\
			=&\frac{\tau}{1-e^{-z\tau}}\sum_{j=1}^{\infty}\left (e^{-zt_{j}}\int_{t_{j-1}}^{t_{j}}f'(r)dr\right )
		\end{aligned}
	\end{equation*}
	yields
	\begin{equation*}
		\begin{aligned}
			\uppercase\expandafter{\romannumeral2}_{k,2}\leq&C\Bigg\|\int_{0}^{t_{n}}\int_{\Gamma^{\tau}_{\theta,\kappa}}e^{z(t_{n}-s)}\left(\frac{1}{2}((	\delta_{\tau}(e^{-z\tau}))^{\alpha}+\mathcal{L})^{-1}\right )^{k}((	\delta_{\tau}(e^{-z\tau}))^{\alpha}+\mathcal{L})^{-1}(\delta_{\tau}(e^{-z\tau}))^{-1}dzf'(s)ds\\
			&\quad-\sum_{j=1}^{n}\int_{t_{j-1}}^{t_{j}}\int_{\Gamma^{\tau}_{\theta,\kappa}}e^{z(t_{n}-t_{j})}\left(\frac{1}{2}((	\delta_{\tau}(e^{-z\tau}))^{\alpha}+\mathcal{L})^{-1}\right )^{k}((	\delta_{\tau}(e^{-z\tau}))^{\alpha}+\mathcal{L})^{-1}(\delta_{\tau}(e^{-z\tau}))^{-1}dzf'(s)ds\Bigg\|_{L^2(\Omega)}\\
			\leq&C\tau\int_{0}^{t_{n}}(t_{n}-s)^{\alpha-1}\|f'(s)\|_{L^2(\Omega)}ds.
		\end{aligned}
	\end{equation*}
	As for $\uppercase\expandafter{\romannumeral4}$, we introduce $\epsilon_{k}=G(t_{k})-G^{k}$ and define $B_{j}$ by
	\begin{equation*}
		B_{j}=\tau\int_{\Gamma^{\tau}_{\theta,\kappa}}e^{zt_{j}}\left(\frac{1}{2}((	\delta_{\tau}(e^{-z\tau}))^{\alpha}+\mathcal{L})^{-1}\right )^{m+1}dz.
	\end{equation*}
	Here we choose $m=\lfloor\frac{1}{\alpha}\rfloor$. It is easy to verify
	\begin{equation}\label{eqpropBj}
		\begin{aligned}
			\|B_{j}\|\leq C\tau t_{j}^{(m+1)\alpha-1}.
		\end{aligned}
	\end{equation}
	Thus, we can rewrite $\uppercase\expandafter{\romannumeral4}$ as
	\begin{equation*}
		\begin{aligned}
			\uppercase\expandafter{\romannumeral4}\leq&C\sum_{j=1}^{n}\Bigg \|\int_{t_{j-1}}^{t_{j}}\int_{\Gamma_{\theta,\kappa}}e^{z(t_{n}-s)}\left(\frac{1}{2}(z^{\alpha}+\mathcal{L})^{-1}\right )^{m+1}dzG(s)ds-B_{n-j}G^{j}\Bigg \|_{L^{2}(\Omega)}\\
			\leq&C\sum_{j=1}^{n}\Bigg \|\int_{t_{j-1}}^{t_{j}}\int_{\Gamma_{\theta,\kappa}}e^{z(t_{n}-s)}\left(\frac{1}{2}(z^{\alpha}+\mathcal{L})^{-1}\right )^{m+1}dz(G(s)-G(t_{j}))ds\Bigg \|_{L^{2}(\Omega)}\\
			&+C\sum_{j=1}^{n}\Bigg \|\left (\int_{t_{j-1}}^{t_{j}}\int_{\Gamma_{\theta,\kappa}}e^{z(t_{n}-s)}\left(\frac{1}{2}(z^{\alpha}+\mathcal{L})^{-1}\right )^{m+1}dzds-B_{n-j}\right )G(t_{j})\Bigg \|_{L^{2}(\Omega)}\\
			&+C\sum_{j=1}^{n}\Bigg \|B_{n-j}\epsilon_{j}\Bigg \|_{L^{2}(\Omega)}\\
			\leq& \uppercase\expandafter{\romannumeral4}_{1}+\uppercase\expandafter{\romannumeral4}_{2}+\uppercase\expandafter{\romannumeral4}_{3}.
		\end{aligned}
	\end{equation*}
	Using Theorem \ref{thmholdere}, one can get
	\begin{equation*}
		\begin{aligned}
			\uppercase\expandafter{\romannumeral4}_{1}\leq& C\Bigg \|\int_{t_{0}}^{t_{1}}\int_{\Gamma_{\theta,\kappa}}e^{z(t_{n}-s)}\left(\frac{1}{2}(z^{\alpha}+\mathcal{L})^{-1}\right )^{m+1}dz(G(s)-G(t_{1}))ds\Bigg \|_{L^{2}(\Omega)}\\
			&+ \sum_{j=2}^{n}C\Bigg \|\int_{t_{j-1}}^{t_{j}}\int_{\Gamma_{\theta,\kappa}}e^{z(t_{n}-s)}\left(\frac{1}{2}(z^{\alpha}+\mathcal{L})^{-1}\right )^{m+1}dz(G(s)-G(t_{j}))ds\Bigg \|_{L^{2}(\Omega)}\\
			\leq&C\int_{t_{0}}^{t_{1}}\int_{\Gamma_{\theta,\kappa}}|e^{z(t_{n}-s)}||z|^{-(m+1)\alpha}|dz|(\|G(s)\|_{L^2(\Omega)}+\|G(t_{1})\|_{L^{2}(\Omega)})ds\\
			&+ \sum_{j=2}^{n}C \int_{t_{j-1}}^{t_{j}}\int_{\Gamma_{\theta,\kappa}}|e^{z(t_{n}-s)}||z|^{-(m+1)\alpha}|dz|\int_{s}^{t_{j}}\|G'(r)\|_{L^{2}(\Omega)}drds	\\
			\leq&C\tau(\|G(s)\|_{L^2(\Omega)}+\|G(t_{1})\|_{L^{2}(\Omega)})\\
			&+\sum_{j=2}^{n}C\int_{t_{j-1}}^{t_{j}}(t_{n}-s)^{(m+1)\alpha-1}\int_{s}^{t_{j}}\|G'(r)\|_{L^{2}(\Omega)}drds\\
			\leq&C\tau\left (\ln(n)\|G_{0}\|_{L^{2}(\Omega)}+\|f(0)\|_{L^{2}(\Omega)}+\int_{0}^{t_{n}}\|f'(s)\|_{L^{2}(\Omega)}ds\right ).
		\end{aligned}
	\end{equation*}
	Simple calculations lead to
	\begin{equation*}
		\begin{aligned}
			&\uppercase\expandafter{\romannumeral4}_{2}\\
			\leq&C\sum_{j=1}^{n}\Bigg \|\left (\int_{t_{j-1}}^{t_{j}}\int_{\Gamma_{\theta,\kappa}}e^{z(t_{n}-s)}\left(\frac{1}{2}(z^{\alpha}+\mathcal{L})^{-1}\right )^{m+1}dzds\right.\\
			&\qquad\qquad\left.-\tau\int_{\Gamma^{\tau}_{\theta,\kappa}}e^{z(t_{n}-t_{j})}\left(\frac{1}{2}((	\delta_{\tau}(e^{-z\tau}))^{\alpha}+\mathcal{L})^{-1}\right )^{m+1}dz\right )\Bigg \|\|G(t_{j}) \|_{L^{2}(\Omega)}\\
			\leq&C\sum_{j=1}^{n}\Bigg \|\int_{t_{j-1}}^{t_{j}}\int_{\Gamma_{\theta,\kappa}^{\tau}}e^{z(t_{n}-t_{j})}\left(\left(\frac{1}{2}(z^{\alpha}+\mathcal{L})^{-1}\right )^{m+1}-\left(\frac{1}{2}((	\delta_{\tau}(e^{-z\tau}))^{\alpha}+\mathcal{L})^{-1}\right )^{m+1}\right )dzds\Bigg \|\|G(t_{j}) \|_{L^{2}(\Omega)}\\
			&+C\sum_{j=1}^{n}\Bigg \|\int_{t_{j-1}}^{t_{j}}\int_{\Gamma_{\theta,\kappa}^{\tau}}(e^{z(t_{n}-s)}-e^{z(t_{n}-t_{j})})\left(\frac{1}{2}(z^{\alpha}+\mathcal{L})^{-1}\right )^{m+1}dzds\Bigg \|\|G(t_{j}) \|_{L^{2}(\Omega)}\\
			&+C\sum_{j=1}^{n}\Bigg \|\int_{t_{j-1}}^{t_{j}}\int_{\Gamma_{\theta,\kappa}\backslash\Gamma^{\tau}_{\theta,\kappa}}e^{z(t_{n}-s)}\left(\frac{1}{2}(z^{\alpha}+\mathcal{L})^{-1}\right )^{m+1}dzds\Bigg \|\|G(t_{j}) \|_{L^{2}(\Omega)}\\
			\leq&C\tau^{2}\sum_{j=1}^{n}(t_{n}-t_{j}+\tau)^{(m+1)\alpha-2}\|G(t_{j}) \|_{L^{2}(\Omega)}+C\tau\sum_{j=1}^{n}\int_{t_{j-1}}^{t_{j}}(t_{n}-s)^{(m+1)\alpha-2}ds\|G(t_{j}) \|_{L^{2}(\Omega)}\\
			\leq&C\tau\left (\|G_{0}\|_{L^{2}(\Omega)}+\|f(0)\|_{L^{2}(\Omega)}+\int_{0}^{t_{n}}\|f'(s)\|_{L^{2}(\Omega)}ds\right ).
		\end{aligned}
	\end{equation*}
	As for $\uppercase\expandafter{\romannumeral4}_{3}$, using \eqref{eqpropBj}, one has
	\begin{equation*}
		\uppercase\expandafter{\romannumeral4}_{3}\leq C\tau\sum_{j=1}^{n}(t_{n}-t_{j})^{(m+1)\alpha-1}\|\epsilon_{j}\|_{L^2(\Omega)}.
	\end{equation*}
	Thus combining the discrete Gr\"onwall's inequality \citep[see, e.g.,][]{Thomee.2006Gfemfpp}, the desired result is reached.
\end{proof}

\section{Fully-discrete scheme and error analysis}
In this section, we provide the fully-discrete scheme by using local discontinuous Galerkin (LDG) method to discretize the operator $\mathcal{L}$ and discuss the resulting error estimates.

Introduce a well approximation of the physical domain $\Omega$ by the computational domain $\Omega_{h}$ and the rectangle meshes are used. Let mesh size $h=1/N$ $(N\in\mathbb{N}^{*})$, $x_{i}=v_{i}=ih$, $(i=0,1,\ldots,N)$ and the elements
\begin{equation*}
	I_{i,j}=(x_{i-1},x_{i})\times(v_{j-1},v_{j})\quad i,j=1,2,\ldots,N.
\end{equation*} %Denote $\boldsymbol{\Gamma}$ as all of the boundaries of  $I_{i,j}$. And $\boldsymbol{\Gamma}_{\mathbb{I}}$ and $\boldsymbol{\Gamma}_{\mathbb{B}}$ consist of all purely internal edges and external edges of the domain boundary, respectively.
The polynomial space $\mathbb{P}_{k}(I_{i,j})$ consists of polynomials in $I_{i,j}$ of degree at most $k$ $(k\geq 1)$ and the discontinuous finite element space $V_{h,k}$ can be defined by
\begin{equation*}
	V_{h,k}=\{v:\Omega_{h}\rightarrow \mathbb{R}|~v|_{I_{i,j}}\in \mathbb{P}_{k}(I_{i,j}),i,j=1,\ldots,N\}.
\end{equation*}

Let $\mathbf{P}^{n}=\{P^{n}_{x},P^{n}_{v}\}=\nabla G^{n}$. According to \eqref{eqsemidissch} and the definition of $\mathcal{L}$, we have
\begin{equation}\label{eqvarform}
	\left \{\begin{aligned}
		&\left(\sum_{k=0}^{n-1}d^{(\alpha)}_{k}(G^{n-k}-G^{0}),\mu\right)_{I_{i,j}}+\left(vP^{n}_{x},\mu\right)_{I_{i,j}}-\left(vP^{n}_{v},\mu\right)_{I_{i,j}}\\
		&\qquad\qquad\qquad+\left (P^{n}_{v},\frac{\partial}{\partial v}\mu\right )_{I_{i,j}}-\int_{x_{i-1}}^{x_{i}}P^{n}_{v}\mu|_{v=v_{j-1}}^{v=v_{j}}dx=\left(f^{n},\mu\right)_{I_{i,j}}+\left(G^{n},\mu\right)_{I_{i,j}},\\
		&\left(\mathbf{P}^{n},\boldsymbol{\nu}\right)_{I_{i,j}}=-(G^{n},\nabla\cdot \boldsymbol{\nu})_{I_{i,j}}+(G^{n},\mathbf{n}\cdot\boldsymbol{\nu})_{\partial I_{i,j}},
		\\
		&G^{n}(x,0)=G^{n}(x,1)=G^{n}(0,v)=0,\\
		&(G^{0},\mu)_{I_{i,j}}=(G_{0},\mu)_{I_{i,j}},
	\end{aligned}\right .
\end{equation}
for all $\mu\in H^{1}(\Omega)$ and $\boldsymbol{\nu}\in (H^{1}(\Omega))^{2}$. Here $\mathbf{n}$ means the outward unit vector of $\partial I_{i,j}$.  Let $\{G^{n}_{h},\mathbf{P}^{n}_{h}\}=\{G^{n}_{h},\{P^{n}_{x,h},P^{n}_{v,h}\}\}$ be the approximation of $\{G^{n},\mathbf{P}^{n}\}$. Then we can write the LDG scheme as: Find $\{G^{n}_{h},\mathbf{P}^{n}_{h}\}\in V_{h,k}\times (V_{h,k})^{2}$ such that
\begin{equation}\label{eqfullscheme}
	\left \{\begin{aligned}
		&\left(\sum_{k=0}^{n-1}d^{(\alpha)}_{k}(G^{n-k}_{h}-G^{0}_{h}),\mu_{h}\right)_{I_{i,j}}+\left(vP^{n}_{x,h},\mu_{h}\right)_{I_{i,j}}-\left(vP^{n}_{v,h},\mu_{h}\right)_{I_{i,j}}\\
		&\qquad\qquad\qquad-\left (\frac{\partial}{\partial v}P^{n}_{v,h},\mu_{h}\right )_{I_{i,j}}+\int_{x_{i-1}}^{x_{i}}(P^{n}_{v,h}-\hat{P}^{n}_{v,h})\mu_{h}|_{v=v_{j-1}}^{v=v_{j}}dx=\left(f^{n},\mu_{h}\right)_{I_{i,j}}+\left(G^{n}_{h},\mu_{h}\right)_{I_{i,j}},\\
		&\left(\mathbf{P}^{n}_{h},\boldsymbol{\nu}_{h}\right)_{I_{i,j}}=(\nabla G^{n}_{h},\boldsymbol{\nu}_{h})_{I_{i,j}}-(G^{n}_{h}-\hat{G}^{n}_{h},\mathbf{n}\cdot\boldsymbol{\nu}_{h})_{\partial I_{i,j}},
		\\
		&G^{n}(x,0)=G^{n}(x,1)=G^{n}(0,v)=0,\\
		&(G^{0}_{h},\mu_{h})_{I_{i,j}}=(G_{0},\mu_{h})_{I_{i,j}},
	\end{aligned}\right .
\end{equation}
for all $\mu_{h}\in V_{h,k}$ and $\boldsymbol{\nu}_{h}\in (V_{h,k})^{2}$. Here we choose the fluxes \citep[see, e.g.,][]{Castillo.2001OapeefthvotldGmfcdp,Cockburn.1998TLDGMfTDCDS} %, i.e.,% the alternating direction flux and upwind flux
\begin{equation}\label{eqfluxx}
	\begin{aligned}
		&\hat{G}^{n}_{h}(x_{i},v)=G^{n}_{h}(x_{i}^{-},v)=\lim_{x\rightarrow x_{i}^{-} }G^{n}_{h}(x,v)\qquad i=1,\ldots, N,\\
		&\hat{G}^{n}_{h}(x_{0},v)=G^{n}(x_{0},v),\\
	\end{aligned}
\end{equation}
and
%\begin{equation}\label{eqfluxv1}
%	\begin{aligned}
%		&\hat{G}^{n}_{h}(x,v_{j})=G^{n}_{h}(x,v_{j}^{+})=\lim_{v\rightarrow v_{j}^{+} }G^{n}_{h}(x,v),\\ &\hat{P}^{n}_{v,h}(x,v_{j})=P^{n}_{v,h}(x,v_{j}^{-})=\lim_{v\rightarrow v_{j}^{-} }P^{n}_{v,h}(x,v)
%	\end{aligned}
%\end{equation}
%or
\begin{equation}\label{eqfluxv2}
	\begin{aligned}
		&\hat{G}^{n}_{h}(x,v_{j})=G^{n}_{h}(x,v_{j}^{+})=\lim_{v\rightarrow v_{j}^{+} }G^{n}_{h}(x,v)\quad j=1,\ldots,N-1,\\
		&\hat{G}^{n}_{h}(x,v_{0})=\hat{G}^{n}_{h}(x,v_{N})=0,\\ &\hat{P}^{n}_{v,h}(x,v_{j})=P^{n}_{v,h}(x,v_{j}^{-})=\lim_{v\rightarrow v_{j}^{-} }P^{n}_{v,h}(x,v)\quad j=1,\ldots,N,\\
		&\hat{P}^{n}_{v,h}(x,v_{0})=P^{n}_{v,h}(x,v_{0}^{+})+\frac{\vartheta G^{n}_{h}(x,v_{0}^{+})}{h},
	\end{aligned}
\end{equation}
where $\vartheta$ is a positive constant.

Denote $\boldsymbol{\phi}_{h}=\{G^{k}_{h},\mathbf{P}^{k}_{h}\}_{k=1}^{\infty}$ and $\boldsymbol{\psi}_{h}=\{\mu_{h},\boldsymbol{\nu}_{h},\eta_{h}\}$. Introduce %$\mathcal{B}_{n}(\boldsymbol{\phi}_{h},\boldsymbol{\psi}_{h})$ as
\begin{equation*}
	\begin{aligned}
		\mathcal{B}_{n}(\boldsymbol{\phi}_{h},\boldsymbol{\psi}_{h})=&\sum_{i,j=1}^{N}\Bigg(\left(\sum_{k=0}^{n-1}d^{(\alpha)}_{k}(G^{n-k}_{h}-G^{0}_{h}),\mu_{h}\right)_{I_{i,j}}+\left(vP^{n}_{x,h},\mu_{h}\right)_{I_{i,j}}-\left(vP^{n}_{v,h},\mu_{h}\right)_{I_{i,j}}\\
		&\qquad\qquad-\left (\frac{\partial}{\partial v}P^{n}_{v,h},\mu_{h}\right )_{I_{i,j}}+\int_{x_{i-1}}^{x_{i}}(P^{n}_{v,h}-\hat{P}^{n}_{v,h})\mu_{h}|_{v=v_{j-1}}^{v=v_{j}}dx-\left(G^{n}_{h},\mu_{h}\right)_{I_{i,j}}\\
		&\qquad\qquad-\left(\mathbf{P}^{n}_{h},\boldsymbol{\nu}_{h}\right)_{I_{i,j}}+(\nabla G^{n}_{h},\boldsymbol{\nu}_{h})_{I_{i,j}}-(G^{n}_{h}-\hat{G}^{n}_{h},\mathbf{n}\cdot\boldsymbol{\nu}_{h})_{\partial I_{i,j}}\\
		&\qquad\qquad+(P^{n}_{v,h},\eta_{h})_{I_{i,j}}-(\frac{\partial}{\partial v} G^{n}_{h},\eta_{h})_{I_{i,j}}+\int_{x_{i-1}}^{x_{i}}(G^{n}_{h}-\hat{G}^{n}_{h})\eta_{h}|_{v=v_{j-1}}^{v=v_{j}}dx\Bigg).
	\end{aligned}
\end{equation*}
Thus we can rewrite \eqref{eqfullscheme} as
\begin{equation*}
	\mathcal{B}_{n}(\boldsymbol{\phi}_{h},\boldsymbol{\psi}_{h})=\sum_{i,j=1}^{N}(f^{n},\mu_{h})_{I_{i,j}}.
\end{equation*}

In what follows, we consider stability of the provided scheme. Let $\{\bar{G}^{n}_{h},\bar{\mathbf{P}}^{n}_{h}\}$ be the perturbed solution
of $\{G^{n}_{h},\mathbf{P}^{n}_{h}\}$. Denote $\boldsymbol{\epsilon}=\{\boldsymbol{\epsilon}^{k}\}_{k=1}^{\infty}$ and $\boldsymbol{\epsilon}^{n}=\{\epsilon^{n}_{G},\boldsymbol{\epsilon}^{n}_{\mathbf{P}}\}$ with $\epsilon^{n}_{G}=G^{n}_{h}-\bar{G}^{n}_{h}$, $\boldsymbol{\epsilon}^{n}_{\mathbf{P}}=\{\epsilon^{n}_{P,x},\epsilon^{n}_{P,v}\}=\mathbf{P}^{n}_{h}-\bar{\mathbf{P}}^{n}_{h}$. Then there holds
\begin{equation*}
	\mathcal{B}_{n}(\boldsymbol{\epsilon},\boldsymbol{\psi}_{h})=0.
\end{equation*}
To prove stability of the scheme, we first provide a new discrete Gr\"onwall's inequality.
\begin{lemma}\label{lemstab}
	Let $u^{n}\geq 0$ $(n=0,1,\ldots)$ satisfy
	\begin{equation}\label{eqlemstabcon}
		\sum_{k=0}^{n-1}d^{(\alpha)}_{k}(u^{n-k}-u^{0})\leq u^{n}.
	\end{equation}
	Then we have
	\begin{equation*}
		u^{n}\leq C|u^{0}|.
	\end{equation*}
\end{lemma}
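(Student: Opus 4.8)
The plan is to compare $\{u^n\}$ with the exact solution of the associated discrete fractional ODE and then bound that solution. Throughout I assume $\tau$ is small enough that $d^{(\alpha)}_0 = \tau^{-\alpha} > 1$, which is the regime of interest. First I would rewrite \eqref{eqlemstabcon}: expanding the left-hand side and invoking Lemma \ref{lempropb} (in particular $\sum_{k=0}^\infty d^{(\alpha)}_k = 0$, so that $\sum_{k=1}^\infty(-d^{(\alpha)}_k) = d^{(\alpha)}_0$ and $\sum_{k=0}^{n-1}d^{(\alpha)}_k = \sum_{k=n}^\infty(-d^{(\alpha)}_k) \ge 0$), the hypothesis becomes $\sum_{k=0}^{n-1}d^{(\alpha)}_k u^{n-k} \le u^n + u^0\sum_{k=0}^{n-1}d^{(\alpha)}_k$.

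Next I would introduce the comparison sequence $\{v^n\}$ solving the equality $\sum_{k=0}^{n-1}d^{(\alpha)}_k(v^{n-k}-v^0) = v^n$ with $v^0 = u^0$, i.e. the backward-Euler convolution-quadrature discretisation of the scalar problem ${}_{0}\partial_{t}^{\alpha}(v-v^0) = v$. Writing $e^n = v^n - u^n$ and subtracting the two relations (the forcing terms containing $u^0 = v^0$ cancel) gives $\sum_{k=0}^{n-1}d^{(\alpha)}_k e^{n-k} \ge e^n$ with $e^0 = 0$, equivalently $(d^{(\alpha)}_0 - 1)e^n \ge \sum_{k=1}^{n-1}(-d^{(\alpha)}_k)e^{n-k}$. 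Since $d^{(\alpha)}_0 - 1 > 0$ and $-d^{(\alpha)}_k > 0$ for $k \ge 1$, a straightforward induction yields $e^n \ge 0$, i.e. the discrete comparison principle $u^n \le v^n$. This step is clean.

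It then remains to show $v^n \le C|u^0|$ with $C = C(T,\alpha)$. I would solve the equality explicitly: setting $\bar v^n = v^n - v^0$ (so $\bar v^0 = 0$) turns it into the full convolution $\sum_{k=0}^{n}d^{(\alpha)}_k\bar v^{n-k} = \bar v^n + v^0$ for $n \ge 1$, whose generating function is $\bar V(\zeta) = \frac{v^0\zeta}{(1-\zeta)(\delta_\tau(\zeta)^\alpha - 1)}$. Representing $\bar v^n$ by a Cauchy integral over $|\zeta| = \xi_\tau$, substituting $\zeta = e^{-z\tau}$ and deforming onto $\Gamma^{\tau}_{\theta,\kappa}$ exactly as in the derivation of \eqref{eqtimschsol}, I would estimate the integral with the elementary scalar resolvent bound $|(\delta_\tau(e^{-z\tau})^\alpha - 1)^{-1}| \le C|z|^{-\alpha}$, valid once $\kappa$ is fixed large enough that the symbol stays away from $1$ on the contour. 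Over the finite horizon $t_n \le T$ this produces the discrete Mittag--Leffler bound $|v^n| \le C(T,\alpha)\,v^0$. Combining with the comparison principle and $u^0 \ge 0$ gives $u^n \le v^n \le C|u^0|$.

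The main obstacle is precisely this last bound on $v^n$. A naive discrete Gr\"onwall estimate fails here: since $\sum_{k=1}^\infty(-d^{(\alpha)}_k)/(d^{(\alpha)}_0 - 1) = d^{(\alpha)}_0/(d^{(\alpha)}_0 - 1) > 1$, the total coefficient mass exceeds one, reflecting the genuine (Mittag--Leffler) growth of the solution of ${}_{0}\partial_{t}^{\alpha}(v-v^0) = v$. One must therefore exploit the finiteness of the time interval $t_n \le T$ --- equivalently, the growing mode associated with the singularity of $\bar V$ at $\zeta = 1-\tau$ contributes at most a factor of order $e^{t_n} \le e^{T}$ --- rather than hoping for a horizon-independent bound.
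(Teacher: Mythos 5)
Your proof is correct, and the first half coincides in spirit with the paper's: both arguments introduce a comparison sequence $v^n$ with $v^0=u^0$, subtract, and use the sign pattern of the weights from Lemma \ref{lempropb} to conclude $u^n\le v^n$ (your version additionally needs $d^{(\alpha)}_0=\tau^{-\alpha}>1$, which is harmless). The second half, however, takes a genuinely different route. The paper's $v^n$ solves $\sum_{k=0}^{n-1}d^{(\alpha)}_k(v^{n-k}-v^0)=u^n$, i.e.\ it inverts the discrete fractional derivative against the \emph{given} sequence $u$; the contour estimate then only requires the clean bound $\|(\delta_\tau(e^{-z\tau}))^{-\alpha}\|\le C|z|^{-\alpha}$ and yields $u^n\le v^n\le C|u^0|+C\tau\sum_{i=0}^{n-1}t_{i+1}^{\alpha-1}|u^{n-i}|$, after which the standard discrete Gr\"onwall inequality with weakly singular kernel finishes the proof. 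You instead take $v^n$ to be the exact discrete Mittag--Leffler solution of $\sum_{k=0}^{n-1}d^{(\alpha)}_k(v^{n-k}-v^0)=v^n$ and bound it directly from the generating function $\bar V(\zeta)=v^0\zeta\big((1-\zeta)(\delta_\tau(\zeta)^\alpha-1)\big)^{-1}$, which dispenses with Gr\"onwall altogether but forces you to control the symbol $(\delta_\tau(e^{-z\tau})^\alpha-1)^{-1}$ near its zero at $z\approx 1$; as you note, this is handled either by taking $\kappa$ large enough that the arc encloses that point (at the price of a factor $e^{\kappa T}$ from the arc) or by extracting the residue, which is $O(e^{T})$. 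Both routes give a constant depending on $T$, as in the paper. Your closing remark that a ``naive'' iteration of the inequality cannot work because the coefficient mass $\sum_{k\ge 1}(-d^{(\alpha)}_k)/(d^{(\alpha)}_0-1)>1$ is a fair diagnosis of why some comparison or kernel structure must be exploited, though it should not be read as an obstruction to the paper's Gr\"onwall step, which is applied to the already-inverted inequality with the integrable kernel $t^{\alpha-1}$.
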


\begin{proof}
	Assume that $v^{n}$ satisfies
	\begin{equation}\label{eqlemstabcon2}
		\left \{
		\begin{aligned}
			&\sum_{k=0}^{n-1}d^{(\alpha)}_{k}(v^{n-k}-v^{0})= u^{n},\\
			&v^{0}=u^{0}.
		\end{aligned}
		\right .
	\end{equation}
	Multiplying $\zeta^{n}$ on both sides of \eqref{eqlemstabcon2} and summing $n$ from $1$ to $\infty$, we obtain
	\begin{equation*}
		\sum_{n=1}^{\infty}\sum_{k=0}^{n-1}d^{(\alpha)}_{k}(v^{n-k}-v^{0})\zeta^{n}= \sum_{n=1}^{\infty}u^{n}\zeta^{n}.
	\end{equation*}
	Combining the definition of $d^{(\alpha)}_{k}$, i.e., \eqref{eqdefdja}, one can get
	\begin{equation*}
		\sum_{n=1}^{\infty}v^{n}\zeta^{n}= \frac{1}{\tau}(\delta_{\tau}(\zeta))^{-1}\zeta u^{0}+(\delta_{\tau}(\zeta))^{-\alpha}\sum_{n=1}^{\infty}u^{n}\zeta^{n}.
	\end{equation*}
	Using Cauchy's integral theorem and doing simple calculations lead to
	\begin{equation*}
		\begin{aligned}
			v^{n}=&\frac{1}{2\pi\mathbf{i}}\int_{\Gamma^{\tau}_{\theta,\kappa}}e^{zt_{n-1}}(\delta_{\tau}(e^{-z\tau}))^{-1}u^{0}dz+\frac{\tau}{2\pi\mathbf{i}}\int_{\Gamma^{\tau}_{\theta,\kappa}}e^{zt_{n}}(\delta_{\tau}(e^{-z\tau}))^{-\alpha}\sum_{i=1}^{\infty}u^{i}e^{-zt_{i}}dz.\\
		\end{aligned}
	\end{equation*}
	Thus by using $C_{1}|z|\leq |\delta_{\tau}(e^{-z\tau})|\leq C_{2}|z|$ with $C_{1},~C_{2}$ being two positive constants for $z\in \Gamma^{\tau}_{\theta,\kappa}$ \citep[see, e.g.,][]{Jin.2016TFDSfFDaDWEwND,Jin.2017CoHOBCQfFEE} and simple calculations, there holds
	\begin{equation*}
		|v^{n}|\leq C|u^{0}|+C\tau\sum_{i=0}^{n-1}t_{i+1}^{\alpha-1}|u^{n-i}|.
	\end{equation*}
	Subtracting \eqref{eqlemstabcon2} from \eqref{eqlemstabcon}, we have
	\begin{equation*}
		\sum_{k=0}^{n-1}d^{(\alpha)}_{k}(u^{n-k}-v^{n-k})\leq 0.
	\end{equation*}
	Further, Lemma \ref{lempropb} gives
	\begin{equation*}
		u^{n}\leq v^{n}\leq C|u^{0}|+C\tau\sum_{i=0}^{n-1}t_{i+1}^{\alpha-1}|u^{n-i}|.
	\end{equation*}
	Combining the discrete Gr\"onwall inequality \citep[see, e.g.,][]{Thomee.2006Gfemfpp}, we have
	\begin{equation*}
		u^{n}\leq C|u^{0}|.
	\end{equation*}
	This completes the proof.
\end{proof}

Thanks to above Lemma \ref{lemstab}, we can get the following stability result.
\begin{theorem}\label{thmstab}
	The scheme \eqref{eqfullscheme} with fluxes \eqref{eqfluxx} and \eqref{eqfluxv2} is $L^{2}$ stable, i.e.,
	\begin{equation*}
		\|\epsilon^{n}_{G}\|_{L^{2}(\Omega)}\leq C\|\epsilon^{0}_{G}\|_{L^{2}(\Omega)},
	\end{equation*}
	where $\epsilon^{n}_{G}=G^{n}_{h}-\bar{G}^{n}_{h}$.
\end{theorem}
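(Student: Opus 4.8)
The plan is to derive an energy identity for the perturbation $\boldsymbol{\epsilon}^{n}$ by testing the error equation $\mathcal{B}_{n}(\boldsymbol{\epsilon},\boldsymbol{\psi}_{h})=0$ against a carefully chosen $\boldsymbol{\psi}_{h}$, and then to invoke the new discrete Gr\"onwall inequality of Lemma \ref{lemstab} to conclude $L^{2}$ stability. The natural test choice is $\mu_{h}=\epsilon^{n}_{G}$, $\boldsymbol{\nu}_{h}=\boldsymbol{\epsilon}^{n}_{\mathbf{P}}$, and $\eta_{h}=\epsilon^{n}_{P,v}$, so that the $\mathbf{P}$-equation and the auxiliary $\eta_{h}$-equation cancel the awkward first-order-in-$v$ coupling terms $(vP^{n}_{x,h},\mu_{h})$ and $(\frac{\partial}{\partial v}P^{n}_{v,h},\mu_{h})$ against the gradient and flux contributions. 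First I would write out $\mathcal{B}_{n}(\boldsymbol{\epsilon},\boldsymbol{\epsilon}^{n})=0$ explicitly, substituting the error versions of the numerical fluxes \eqref{eqfluxx}--\eqref{eqfluxv2}, and collect the volume terms and the interface/boundary terms separately.

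The key algebraic step is to reproduce, at the discrete level, the coercivity computation \eqref{eqbinformL} that was carried out for the continuous operator $\mathcal{L}$. Concretely, after the cancellations I expect the quadratic form to reduce to a nonnegative combination consisting of: the $\|\frac{\partial}{\partial v}\epsilon^{n}_{G}\|$-type contribution coming from $(\epsilon^{n}_{P,v},\frac{\partial}{\partial v}\epsilon^{n}_{G})$, a positive boundary term $\frac{1}{2}\int v\,(\epsilon^{n}_{G})^{2}$ at the outflow edges $x=x_{i}$ arising from the upwind flux \eqref{eqfluxx} for $G$, and the stabilization term $\frac{\vartheta}{h}\int (\epsilon^{n}_{G}(x,v_{0}^{+}))^{2}dx$ produced by the penalized flux $\hat{P}^{n}_{v,h}$ at $v_{0}$. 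The upwinding on the $x$-variable and the $v$-direction flux choices are exactly engineered so that all indefinite jump terms telescope across elements and leave only these manifestly nonnegative pieces, so that $(\text{diffusive/boundary terms})\geq 0$ while the reaction term $-(\epsilon^{n}_{G},\epsilon^{n}_{G})$ is moved to the right-hand side. This yields
\begin{equation*}
	\left(\sum_{k=0}^{n-1}d^{(\alpha)}_{k}(\epsilon^{n-k}_{G}-\epsilon^{0}_{G}),\epsilon^{n}_{G}\right)\leq \|\epsilon^{n}_{G}\|_{L^{2}(\Omega)}^{2}.
\end{equation*}

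To pass from this inner-product inequality to the pointwise hypothesis \eqref{eqlemstabcon} of Lemma \ref{lemstab}, I would apply the Cauchy--Schwarz inequality to the right-hand side and, on the left, use the sign structure of the quadrature weights from Lemma \ref{lempropb} together with the elementary identity $(a-b,a)\geq \frac{1}{2}(\|a\|^{2}-\|b\|^{2})$ to bound $\sum_{k=0}^{n-1}d^{(\alpha)}_{k}(\|\epsilon^{n-k}_{G}\|^{2}-\|\epsilon^{0}_{G}\|^{2})$ from below by $\left(\sum_{k=0}^{n-1}d^{(\alpha)}_{k}(\epsilon^{n-k}_{G}-\epsilon^{0}_{G}),\epsilon^{n}_{G}\right)$. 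Setting $u^{n}=\|\epsilon^{n}_{G}\|_{L^{2}(\Omega)}^{2}$ then puts the estimate into precisely the form \eqref{eqlemstabcon}, and Lemma \ref{lemstab} delivers $u^{n}\leq C|u^{0}|$, i.e. the claimed $\|\epsilon^{n}_{G}\|_{L^{2}(\Omega)}\leq C\|\epsilon^{0}_{G}\|_{L^{2}(\Omega)}$.

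The main obstacle will be the bookkeeping in the second paragraph: verifying that the interface terms generated by the particular one-sided fluxes (upwind in $x$, and the $v$-fluxes in \eqref{eqfluxv2}) really do cancel against the volume terms arising from integration by parts, leaving only nonnegative boundary and penalty contributions and reproducing the discrete analogue of \eqref{eqbinformL}. This is where the specific, seemingly asymmetric flux choices pay off, and getting every jump/average term to telescope correctly across all elements (including the physical boundaries where the fluxes are modified) is the delicate part; once that discrete coercivity is in hand, the reduction to Lemma \ref{lemstab} is routine.
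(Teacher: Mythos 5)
Your overall strategy (energy identity from $\mathcal{B}_{n}(\boldsymbol{\epsilon},\boldsymbol{\psi}_{h})=0$, telescoping of the upwind/penalized fluxes into nonnegative jump and boundary terms, then reduction to the hypothesis of Lemma \ref{lemstab} with $u^{n}=\|\epsilon^{n}_{G}\|_{L^{2}(\Omega)}^{2}$) is exactly the paper's, and the last two steps of your plan are sound. But there is a genuine gap in the central step: the test function $\boldsymbol{\nu}_{h}=\boldsymbol{\epsilon}^{n}_{\mathbf{P}}$ does not produce the cancellation you are counting on. Observe that the $\boldsymbol{\nu}_{h}$-block of $\mathcal{B}_{n}$, namely
\begin{equation*}
	-\left(\boldsymbol{\epsilon}^{n}_{\mathbf{P}},\boldsymbol{\nu}_{h}\right)_{I_{i,j}}+(\nabla \epsilon^{n}_{G},\boldsymbol{\nu}_{h})_{I_{i,j}}-(\epsilon^{n}_{G}-\hat{\epsilon}^{n}_{G},\mathbf{n}\cdot\boldsymbol{\nu}_{h})_{\partial I_{i,j}},
\end{equation*}
vanishes identically for \emph{every} $\boldsymbol{\nu}_{h}\in (V_{h,k})^{2}$, because it is just the error form of the defining equation for $\mathbf{P}^{n}_{h}$. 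So taking $\boldsymbol{\nu}_{h}=\boldsymbol{\epsilon}^{n}_{\mathbf{P}}$ contributes nothing, and the convection terms $\left(v\epsilon^{n}_{P,x},\epsilon^{n}_{G}\right)-\left(v\epsilon^{n}_{P,v},\epsilon^{n}_{G}\right)$ from the $\mu_{h}$-equation survive untouched. These couple $\epsilon^{n}_{P,x}$ to $\epsilon^{n}_{G}$, and there is no positive $\|\epsilon^{n}_{P,x}\|^{2}$ energy anywhere in the identity to absorb them (the only positive auxiliary energy is $\|\epsilon^{n}_{P,v}\|^{2}$ from the $\eta_{h}$-block), so the argument stalls.

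The missing idea is to use the freedom in $\boldsymbol{\nu}_{h}$ precisely to \emph{rewrite} those convection terms: the paper takes $\boldsymbol{\nu}_{h}=\{v\epsilon^{n}_{G},-v\epsilon^{n}_{G}\}$, so that $-\left(\boldsymbol{\epsilon}^{n}_{\mathbf{P}},\boldsymbol{\nu}_{h}\right)$ exactly cancels $\left(v\epsilon^{n}_{P,x},\epsilon^{n}_{G}\right)-\left(v\epsilon^{n}_{P,v},\epsilon^{n}_{G}\right)$, while the remaining $(\nabla \epsilon^{n}_{G},\boldsymbol{\nu}_{h})-(\epsilon^{n}_{G}-\hat{\epsilon}^{n}_{G},\mathbf{n}\cdot\boldsymbol{\nu}_{h})$ becomes the pure-$\epsilon^{n}_{G}$ discrete convection form, which is what actually telescopes under the fluxes \eqref{eqfluxx}--\eqref{eqfluxv2} into the nonnegative jump terms, the outflow boundary terms, and the extra $+\tfrac{1}{2}\|\epsilon^{n}_{G}\|^{2}$ that reproduces \eqref{eqbinformL} at the discrete level. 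Your choices $\mu_{h}=\epsilon^{n}_{G}$ and $\eta_{h}=\epsilon^{n}_{P,v}$ are correct, and with $\boldsymbol{\nu}_{h}$ corrected as above the rest of your outline (Cauchy--Schwarz plus the sign structure of $d^{(\alpha)}_{k}$ to pass to squared norms, then Lemma \ref{lemstab}) goes through as written.
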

\begin{proof}
	Choosing $\mu_{h}=\epsilon^{n}_{G}$, $\boldsymbol{\nu}_{h}=\{v\epsilon^{n}_{G},-v\epsilon^{n}_{G}\}$, and $\eta_{h}=\epsilon^{n}_{P,v}$, one has
	\begin{equation*}
		\begin{aligned}
			\mathcal{B}_{n}(\boldsymbol{\epsilon},\boldsymbol{\psi}_{h})=&\sum_{i,j=1}^{N}\Bigg(\left(\sum_{k=0}^{n-1}d^{(\alpha)}_{k}(\epsilon^{n-k}_{G}-\epsilon^{0}_{G}),\epsilon^{n}_{G}\right)_{I_{i,j}}+\left(v\epsilon^{n}_{P,x},\epsilon^{n}_{G}\right)_{I_{i,j}}-\left(v\epsilon^{n}_{P,v},\epsilon^{n}_{G}\right)_{I_{i,j}}\\
			&\qquad\qquad-\left (\frac{\partial}{\partial v}\epsilon^{n}_{P,v},\epsilon^{n}_{G}\right )_{I_{i,j}}+\int_{x_{i-1}}^{x_{i}}(\epsilon^{n}_{P,v}-\hat{\epsilon}^{n}_{P,v})\epsilon^{n}_{G}|_{v=v_{j-1}}^{v=v_{j}}dx-\left(\epsilon^{n}_{G},\epsilon^{n}_{G}\right)_{I_{i,j}}\\
			&\qquad\qquad-\left(\boldsymbol{\epsilon}^{n}_{\mathbf{P}},\boldsymbol{\nu}_{h}\right)_{I_{i,j}}+(\nabla \epsilon^{n}_{G},\boldsymbol{\nu}_{h})_{I_{i,j}}-(\epsilon^{n}_{G}-\hat{\epsilon}^{n}_{G},\mathbf{n}\cdot\boldsymbol{\nu}_{h})_{\partial I_{i,j}}\\
			&\qquad\qquad+(\epsilon^{n}_{P,v},\epsilon^{n}_{P,v})_{I_{i,j}}-(\frac{\partial}{\partial v} \epsilon^{n}_{G},\epsilon^{n}_{P,v})_{I_{i,j}}+\int_{x_{i-1}}^{x_{i}}(\epsilon^{n}_{G}-\hat{\epsilon}^{n}_{G})\epsilon^{n}_{P,v}|_{v=v_{j-1}}^{v=v_{j}}dx\Bigg)=0.
		\end{aligned}
	\end{equation*}
	
	By the fluxes \eqref{eqfluxx} and \eqref{eqfluxv2}, we have
	\begin{equation*}
		\begin{aligned}
			&\sum_{i,j=1}^{N}\Bigg(-\left (\frac{\partial}{\partial v}\epsilon^{n}_{P,v},\epsilon^{n}_{G}\right )_{I_{i,j}}+\int_{x_{i-1}}^{x_{i}}(\epsilon^{n}_{P,v}-\hat{\epsilon}^{n}_{P,v})\epsilon^{n}_{G}|_{v=v_{j-1}}^{v=v_{j}}dx\\
			&\qquad\qquad\qquad-\left (\frac{\partial}{\partial v} \epsilon^{n}_{G},\epsilon^{n}_{P,v}\right )_{I_{i,j}}+\int_{x_{i-1}}^{x_{i}}(\epsilon^{n}_{G}-\hat{\epsilon}^{n}_{G})\epsilon^{n}_{P,v}|_{v=v_{j-1}}^{v=v_{j}}dx\Bigg)=\sum_{i=1}^{N}\int_{x_{i-1}}^{x_{i}}\frac{\vartheta (\epsilon^{n}_{G}(v_{0}^{+}))^{2}}{h}dx
		\end{aligned}
	\end{equation*}
	and
	\begin{equation*}
		\begin{aligned}
			&\sum_{i,j=1}^{N}\Bigg((\nabla \epsilon^{n}_{G},\boldsymbol{\nu}_{h})_{I_{i,j}}-(\epsilon^{n}_{G}-\hat{\epsilon}^{n}_{G},\mathbf{n}\cdot\boldsymbol{\nu}_{h})_{\partial I_{i,j}}\Bigg)\\
			&\qquad=\frac{1}{2}\sum_{j=1}^{N}\int_{v_{j-1}}^{v_{j}}\left(\sum_{i=1}^{N-1}v(\epsilon^{n}_{G}(x_{i}^{-})-\epsilon^{n}_{G}(x_{i}^{+}))^{2}+v(\epsilon^{n}_{G}(x_{0}^{+}))^{2}+v(\epsilon^{n}_{G}(x_{N}^{-}))^{2}\right)dv\\
			&\qquad~~+\frac{1}{2}\sum_{i=1}^{N}\int_{x_{i-1}}^{x_{i}}\left(\sum_{j=1}^{N-1}v_{j}(\epsilon^{n}_{G}(v_{j}^{-})-\epsilon^{n}_{G}(v_{j}^{+}))^{2}+v_{N}(\epsilon^{n}_{G}(v_{N}^{+}))^{2}\right)dx\\
			&\qquad~~+\frac{1}{2}\sum_{i,j=1}^{N}(\epsilon^{n}_{G},\epsilon^{n}_{G})_{I_{i,j}}.
		\end{aligned}
	\end{equation*}
	Thus
	\begin{equation*}
		\begin{aligned}
			&\sum_{i,j=1}^{N}\left(d^{(\alpha)}_{0}\epsilon^{n}_{G},\epsilon^{n}_{G}\right)_{I_{i,j}}\\
			\leq&\sum_{i,j=1}^{N}\Bigg(\left(\sum_{k=0}^{n-1}d^{(\alpha)}_{k}\epsilon^{0}_{G},\epsilon^{n}_{G}\right)_{I_{i,j}}-\left(\sum_{k=1}^{n-1}d^{(\alpha)}_{k}\epsilon^{n-k}_{G},\epsilon^{n}_{G}\right)_{I_{i,j}}+\frac{1}{2}\left(\epsilon^{n}_{G},\epsilon^{n}_{G}\right)_{I_{i,j}}\Bigg)\\
			\leq&\frac{1}{2}\sum_{i,j=1}^{N}\Bigg(\sum_{k=0}^{n-1}d^{(\alpha)}_{k}\|\epsilon^{0}_{G}\|_{L^{2}(I_{i,j})}^{2}+d^{(\alpha)}_{0}\|\epsilon^{n}_{G}\|_{L^{2}(I_{i,j})}^{2}-\sum_{k=1}^{n-1}d^{(\alpha)}_{k}\|\epsilon^{n-k}_{G}\|_{L^{2}(I_{i,j})}^{2}+\|\epsilon^{n}_{G}\|_{L^{2}(I_{i,j})}^{2}\Bigg),
		\end{aligned}
	\end{equation*}
	which leads to
	\begin{equation*}
		\begin{aligned}
			\sum_{k=0}^{n-1}d^{(\alpha)}_{k}\left (\sum_{i,j=1}^{N}\|\epsilon^{n-k}_{G}\|_{L^{2}(I_{i,j})}^{2}-\sum_{i,j=1}^{N}\|\epsilon^{0}_{G}\|_{L^{2}(I_{i,j})}^{2}\right )\leq\sum_{i,j=1}^{N}\|\epsilon^{n}_{G}\|_{L^{2}(I_{i,j})}^{2}.
		\end{aligned}
	\end{equation*}
Further combining Lemma \ref{lemstab}, we can obtain the desired result.
\end{proof}

At last, we provide the error estimate for the fully-discrete scheme \eqref{eqfullscheme}.
Introduce $L^{2}$ projection operator $\mathcal{P}_{x}$ in one dimension as, for all $(x_{i-1},x_{i})\in(0,1)$
\begin{equation*}
	\int_{x_{i-1}}^{x_{i}}(\mathcal{P}_{x}\mu-\mu)\nu dx=0\quad \forall \nu\in \mathbb{P}_{k}(x_{i-1},x_{i}).
\end{equation*}
Define the projection operators $\mathcal{P}^{+}_{x}$ and $\mathcal{P}^{-}_{x}$ as
\begin{equation*}
	\begin{aligned}
		&\int_{x_{i-1}}^{x_{i}}(\mathcal{P}^{+}_{x}\mu-\mu)\nu dx=0~~ \forall \nu\in \mathbb{P}_{k-1}(x_{i-1},x_{i})~~{\rm and}~~\mathcal{P}^{+}_{x}\mu(x_{i-1})=\mu(x_{i-1}),
	\end{aligned}
\end{equation*}
and
\begin{equation*}
	\begin{aligned}
		&\int_{x_{i-1}}^{x_{i}}(\mathcal{P}^{-}_{x}\mu-\mu)\nu dx=0~~\forall \nu\in \mathbb{P}_{k-1}(x_{i-1},x_{i})~~{\rm and}~~\mathcal{P}^{-}_{x}\mu(x_{i})=\mu(x_{i}).
	\end{aligned}
\end{equation*}
Moreover, define the following projections in two dimensions by tensor products
\begin{equation*}
	\begin{aligned}
		& \Pi=\mathcal{P}_{x}^{-}\otimes\mathcal{P}_{v}^{+},\quad\Pi_{x}=\mathcal{P}_{x}\otimes\mathcal{P}_{v},\quad \Pi^{-}_{v}=\mathcal{P}_{x}\otimes\mathcal{P}^{-}_{v}.%,\quad\Pi^{+}_{v}=\mathcal{P}_{x}\otimes\mathcal{P}^{+}_{v}.%
	\end{aligned}
\end{equation*}

\begin{lemma}\citep[see, e.g.,][]{Cockburn.2001SotLDGMfEPoCG}\label{lemspec}
	For any $u\in \mathbb{P}_{k+1}(I_{i,j})$ and $\boldsymbol{\nu}=\{\nu_{x},\nu_{v}\}\in (\mathbb{P}_{k}(I_{i,j}))^{2}$, we have
	\begin{equation*}
		\begin{aligned}
			&\left(u-\Pi u,\frac{\partial}{\partial x}\nu_{x}\right)_{I_{i,j}}-\int_{v_{j-1}}^{v_{j}}(u-\widehat{\Pi u})\nu_{x}|^{x_{i}}_{x_{i-1}}dv=0,\\
			&\left(u-\Pi u,\frac{\partial}{\partial v}\nu_{v}\right)_{I_{i,j}}-\int_{x_{i-1}}^{x_{i}}(u-\widehat{\Pi u})\nu_{v}|^{v_{j}}_{v_{j-1}}dx=0,
		\end{aligned}
	\end{equation*}
	and
	\begin{equation*}
		(u-\Pi u,\nabla\cdot\boldsymbol{\nu})_{I_{i,j}}-(u-\widehat{\Pi u},\mathbf{n}\cdot\boldsymbol{\nu})_{\partial I_{i,j}}=0.
	\end{equation*}
	
\end{lemma}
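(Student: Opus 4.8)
The plan is to reduce the three identities to the one-dimensional defining properties of the factor projections and to exploit the total-degree structure of $\mathbb{P}_k(I_{i,j})$. First I would record the facts I will use repeatedly: by definition $\mathcal{P}_x^-$ reproduces its argument at the right endpoint $x_i$ and its error is $L^2(x_{i-1},x_i)$-orthogonal to $\mathbb{P}_{k-1}$, while $\mathcal{P}_v^+$ reproduces its argument at the lower endpoint $v_{j-1}$ and its error is orthogonal to $\mathbb{P}_{k-1}$ in $v$. Because $\Pi=\mathcal{P}_x^-\otimes\mathcal{P}_v^+$ factorizes and the two factors commute, $u-\Pi u$ is orthogonal to $\mathbb{P}_{k-1}(x)\otimes\mathbb{P}_k(v)$ and to $\mathbb{P}_k(x)\otimes\mathbb{P}_{k-1}(v)$; moreover, using the endpoint exactness one checks that the numerical trace of $\Pi u$ on the $x$-edges satisfies $u-\widehat{\Pi u}=(I-\mathcal{P}_v^+)u$ at $x=x_{i-1},x_i$, and on the $v$-edges $u-\widehat{\Pi u}=(I-\mathcal{P}_x^-)u$ at $v=v_{j-1},v_j$.

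For the first identity I would show that the volume term and the boundary term vanish separately. Since $\nu_x\in\mathbb{P}_k(I_{i,j})$, its $x$-derivative lies in $\mathbb{P}_{k-1}(I_{i,j})\subset\mathbb{P}_{k-1}(x)\otimes\mathbb{P}_k(v)$, so $\left(u-\Pi u,\frac{\partial}{\partial x}\nu_x\right)_{I_{i,j}}=0$ by the orthogonality recorded above. For the boundary term, the decisive point is that in the total-degree space the only monomial of $u$ with $v$-degree exceeding $k$ is the pure power $v^{k+1}$; hence $(I-\mathcal{P}_v^+)u$ is independent of $x$, call it $g(v)$, and it is orthogonal to $\mathbb{P}_{k-1}(v)$. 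Writing the boundary integral as $\int_{v_{j-1}}^{v_j}g(v)\bigl(\nu_x(x_i,v)-\nu_x(x_{i-1},v)\bigr)\,dv$ and observing that the trace difference $\nu_x(x_i,\cdot)-\nu_x(x_{i-1},\cdot)$ has $v$-degree at most $k-1$ (every monomial of $\nu_x$ that actually depends on $x$ carries $v$-degree $\le k-1$), the product integrates to zero. This yields the first identity.

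The second identity is obtained verbatim after exchanging the roles of $x$ and $v$ and of $\mathcal{P}_x^-$ and $\mathcal{P}_v^+$: now $\frac{\partial}{\partial v}\nu_v\in\mathbb{P}_k(x)\otimes\mathbb{P}_{k-1}(v)$ kills the volume term, while $(I-\mathcal{P}_x^-)u$ is independent of $v$ and orthogonal to $\mathbb{P}_{k-1}(x)$, killing the boundary term. Finally, for the third identity I would simply add the first two: since $\nabla\cdot\boldsymbol{\nu}=\frac{\partial}{\partial x}\nu_x+\frac{\partial}{\partial v}\nu_v$ and the edge integral $(u-\widehat{\Pi u},\mathbf{n}\cdot\boldsymbol{\nu})_{\partial I_{i,j}}$ splits into the two $x$-edges (where $\mathbf{n}\cdot\boldsymbol{\nu}=\pm\nu_x$) and the two $v$-edges (where $\mathbf{n}\cdot\boldsymbol{\nu}=\pm\nu_v$), the sum reproduces exactly the two identities already proved.

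The step I expect to be the main obstacle is the vanishing of the boundary terms, because it is where three ingredients must be aligned at once: the numerical trace $\widehat{\Pi u}$ must be taken from the side on which the relevant factor projection is exact (the left trace in $x$, and the trace in $v$ consistent with the fluxes \eqref{eqfluxx}--\eqref{eqfluxv2}), the transverse defect must be orthogonal to $\mathbb{P}_{k-1}$, and the total-degree structure must be invoked to guarantee both that this defect is constant in the edge variable and that the trace difference of the test function loses one degree. In a tensor-product space $Q_k$ this last point fails and the clean cancellation would not hold, so identifying the total-degree hypothesis as essential is the crux of the argument.
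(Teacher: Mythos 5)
The paper offers no proof of this lemma at all: it is imported from the cited reference \citep{Cockburn.2001SotLDGMfEPoCG} and stated without argument, so there is no in-paper derivation to compare against. Your proposal therefore supplies something the paper omits, and on its merits it is essentially correct for the total-degree spaces $\mathbb{P}_k(I_{i,j})$ defined here. Two remarks. First, your blanket claim that $u-\Pi u$ is orthogonal to $\mathbb{P}_{k-1}(x)\otimes\mathbb{P}_{k}(v)$ and to $\mathbb{P}_{k}(x)\otimes\mathbb{P}_{k-1}(v)$ is stronger than what the factorization $I-\Pi=(I-\mathcal{P}^{-}_{x})\otimes I+\mathcal{P}^{-}_{x}\otimes(I-\mathcal{P}^{+}_{v})$ actually yields: the second summand is orthogonal only to $\mathbb{P}_{k-1}$ in $v$, so in general one only gets orthogonality to $\mathbb{P}_{k-1}(x)\otimes\mathbb{P}_{k-1}(v)$. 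This is harmless for your argument, since $\frac{\partial}{\partial x}\nu_{x}$ and $\frac{\partial}{\partial v}\nu_{v}$ have total degree at most $k-1$, but the intermediate statement should be weakened accordingly. Second, your identification of the total-degree hypothesis as the crux is correct and worth stressing, because the cited reference formulates its superconvergence identities for tensor-product polynomial spaces with a differently organized cancellation; for $Q_{k}$ test functions the separate vanishing of the volume and boundary terms that you rely on genuinely fails (for $k=1$, $u=xv^{2}$ and $\nu_{x}=xv$ on the unit square one finds $u-\Pi u=x(v^{2}-\tfrac{2}{3}v)$, a volume term equal to $\tfrac{1}{72}$ and a boundary term equal to $\tfrac{1}{36}$), whereas in the total-degree setting $(I-\mathcal{P}^{+}_{v})u$ collapses to a multiple of $(I-\mathcal{P}^{+}_{v})v^{k+1}$, independent of $x$, exactly as you observe. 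The remaining ingredients --- the endpoint exactness of $\mathcal{P}^{-}_{x}$ at $x_{i}$ and of $\mathcal{P}^{+}_{v}$ at $v_{j-1}$ matching the one-sided traces prescribed by the fluxes \eqref{eqfluxx} and \eqref{eqfluxv2}, the degree drop in the trace differences of the test function, and the reassembly of the third identity from the first two via the outward normal --- are all handled correctly, so your proof stands as a valid self-contained justification of the quoted lemma in the form this paper actually uses it.
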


\begin{theorem}\label{thmfullerr}
	Let $G^{n}$ and $G^{n}_{h}$ be the solutions of \eqref{eqsemidissch} and \eqref{eqfullscheme}, respectively. If $G^{i}\in H^{k+2}(\Omega)$ and $\mathbf{P}^{i}\in(H^{k+1}(\Omega))^{2}$ for $i=0,1,2,\ldots,n$, then we have
	\begin{equation*}
		\|G^{n}-G^{n}_{h}\|_{L^2(\Omega)}\leq Ch^{k+1}.
	\end{equation*}
\end{theorem}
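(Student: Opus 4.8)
The plan is to prove the fully-discrete error estimate via a standard LDG error-splitting argument, comparing the fully-discrete solution $G^n_h$ against the semidiscrete solution $G^n$ through the projection operators $\Pi$, $\Pi_x$, $\Pi_v^-$ introduced just before the theorem. First I would write the error as $G^n - G^n_h = (G^n - \Pi G^n) - (G^n_h - \Pi G^n) =: \rho^n - \xi^n$, where $\rho^n = G^n - \Pi G^n$ is the projection error and $\xi^n = G^n_h - \Pi G^n \in V_{h,k}$ is the projected discrete error; similarly split $\mathbf{P}^n - \mathbf{P}^n_h = \boldsymbol{\rho}^n_{\mathbf P} - \boldsymbol{\xi}^n_{\mathbf P}$ using $\Pi_x$ and $\Pi_v^-$. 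By standard polynomial approximation theory, the projection errors satisfy $\|\rho^n\|_{L^2(\Omega)} \le C h^{k+1}\|G^n\|_{H^{k+2}(\Omega)}$ and $\|\boldsymbol\rho^n_{\mathbf P}\|_{L^2(\Omega)} \le Ch^{k+1}\|\mathbf P^n\|_{H^{k+1}(\Omega)}$, so it suffices to bound $\|\xi^n\|_{L^2(\Omega)}$ by $Ch^{k+1}$.

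Next I would derive the error equation. Since both $\{G^n,\mathbf P^n\}$ (the semidiscrete solution, which is smooth enough to satisfy the weak form \eqref{eqvarform}) and $\{G^n_h,\mathbf P^n_h\}$ satisfy the bilinear form $\mathcal B_n$ against test functions in $V_{h,k}$, Galerkin orthogonality gives $\mathcal B_n(\{G^n-G^n_h,\mathbf P^n-\mathbf P^n_h\},\boldsymbol\psi_h)=0$ for all $\boldsymbol\psi_h\in V_{h,k}\times(V_{h,k})^2\times V_{h,k}$. Substituting the splitting and moving the projection-error terms to the right-hand side yields an equation of the form $\mathcal B_n(\{\xi^n,\boldsymbol\xi^n_{\mathbf P}\},\boldsymbol\psi_h)=\mathcal B_n(\{\rho^n,\boldsymbol\rho^n_{\mathbf P}\},\boldsymbol\psi_h)$. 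The crucial point is that the superconvergence property in Lemma \ref{lemspec} is designed precisely so that the interior-derivative and flux terms involving the projection errors cancel: choosing the test functions as in the stability proof (Theorem \ref{thmstab}), namely $\mu_h=\xi^n$, $\boldsymbol\nu_h=\{v\xi^n,-v\xi^n\}$, $\eta_h=\xi^n_{P,v}$, the terms pairing $\rho^n$ and $\boldsymbol\rho^n_{\mathbf P}$ against spatial derivatives and boundary fluxes vanish by Lemma \ref{lemspec}, leaving only volume $L^2$ inner products of projection errors against $\xi$-quantities on the right-hand side.

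I would then reproduce the coercivity/stability computation from Theorem \ref{thmstab} on the left-hand side: the same cancellations produce the nonnegative jump and boundary terms plus the memory term, yielding
\begin{equation*}
	\sum_{k=0}^{n-1}d^{(\alpha)}_k\Big(\|\xi^{n-k}\|^2_{L^2(\Omega)}-\|\xi^0\|^2_{L^2(\Omega)}\Big)\le \|\xi^n\|^2_{L^2(\Omega)}+R^n,
\end{equation*}
where $R^n$ collects the projection-error contributions, each bounded using Cauchy--Schwarz and Young's inequality by $C h^{2(k+1)}$ plus absorbable multiples of $\|\xi^n\|^2_{L^2(\Omega)}$ and $\|\boldsymbol\xi^n_{\mathbf P}\|^2_{L^2(\Omega)}$; the $\|\boldsymbol\xi^n_{\mathbf P}\|^2$ term is controlled by the $(\epsilon^n_{P,v},\epsilon^n_{P,v})$-type positive term coming from the $\eta_h$ test choice. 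With $\xi^0$ bounded by $Ch^{k+1}$ from the initial $L^2$-projection data, I would apply the discrete Grönwall inequality of Lemma \ref{lemstab} (in its squared form) to conclude $\|\xi^n\|_{L^2(\Omega)}\le Ch^{k+1}$, and then the triangle inequality with the projection estimate finishes the proof.

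The main obstacle I anticipate is handling the nonconstant coefficient $v$ in the convection terms $(vP^n_x,\mu)$ and $(vP^n_v,\mu)$ together with the choice $\boldsymbol\nu_h=\{v\xi^n,-v\xi^n\}$: because the weights $v$ multiply the flux terms, the integration-by-parts that generates the positive jump terms in Theorem \ref{thmstab} must be checked to still close when $\rho^n$ is inserted, and one must verify that the products $v\rho^n$ and $v\boldsymbol\rho^n_{\mathbf P}$ remain orthogonal to the relevant polynomial spaces (or that the non-orthogonal remainder is of the correct order $h^{k+1}$). Carefully tracking these weighted boundary and volume terms, and ensuring the right-hand side remainder $R^n$ stays at order $h^{2(k+1)}$ after Young's inequality absorption, is where the real work lies; everything else parallels the stability argument already established.
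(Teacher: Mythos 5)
Your proposal follows essentially the same route as the paper: the error is split through the projections $\Pi$, $\Pi_x$, $\Pi_v^-$ (your $\xi^n,\rho^n$ are, up to sign, the paper's $\Pi e^n_G$ and $\rho^n_G$), Galerkin orthogonality reduces the problem to $\mathcal{B}_n(\{\xi^n,\boldsymbol{\xi}^n_{\mathbf P}\},\boldsymbol{\psi}_h)=\mathcal{B}_n(\boldsymbol{\rho},\boldsymbol{\psi}_h)$, the same test functions $\{\Pi e^n_G,\{v\Pi e^n_G,-v\Pi e^n_G\},\Pi_v^- e^n_{P_v}\}$ recycle the coercivity computation of Theorem \ref{thmstab}, Lemma \ref{lemspec} plus Cauchy--Schwarz and Young handle the projection-error right-hand side, and the discrete Gr\"onwall structure and the triangle inequality finish the argument. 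Your flagged concern about the weight $v$ in the convection and flux terms is legitimate and is precisely the point the paper addresses when estimating its term $\uppercase\expandafter{\romannumeral3}$, so the proposal is correct and matches the paper's proof in substance.
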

\begin{proof}
	Introduce
	\begin{equation*}
		\mathbf{e}=\{e^{k}_{G},\mathbf{e}^{k}_{\mathbf{P}}\}_{k=1}^{\infty}=\{G^{k}-G^{k}_{h},\mathbf{P}^{k}-\mathbf{P}^{k}_{h}\}_{k=1}^{\infty}.
	\end{equation*}
	From \eqref{eqvarform} and \eqref{eqfullscheme}, it's easy to verify
	\begin{equation*}
		\mathcal{B}_{n}(\mathbf{e},\boldsymbol{\psi}_{h})=0
	\end{equation*}
	for $\boldsymbol{\psi}_{h}\in V_{h,k}\times (V_{h,k})^{2}\times V_{h,k}$. Taking $\boldsymbol{\psi}_{h}=\{\mu_{h},\boldsymbol{\nu}_{h},\eta_{h}\}=\{\Pi e^{n}_{G},\{v\Pi e^{n}_{G},-v\Pi e^{n}_{G}\},\Pi_{v}^{-}e^{n}_{P_{v}}\}$ and denoting $\boldsymbol{\Pi}\mathbf{e}=\{\Pi e^{k}_{G},\{\Pi_{x}e^{k}_{P_{x}},\Pi_{v}^{-}e^{k}_{P_{v}}\}\}_{k=1}^{\infty}$, one can obtain
	\begin{equation}\label{eqBerror}
		\begin{aligned}
			&\mathcal{B}_{n}(\boldsymbol{\Pi}\mathbf{e},\boldsymbol{\psi}_{h})\\
			=&\mathcal{B}_{n}(\mathbf{e},\boldsymbol{\psi}_{h})+\mathcal{B}_{n}(\boldsymbol{\rho},\boldsymbol{\psi}_{h})\\
			=&\mathcal{B}_{n}(\boldsymbol{\rho},\boldsymbol{\psi}_{h}),
		\end{aligned}
	\end{equation}
	where $\boldsymbol{\rho}=\{\rho^{k}_{G},\boldsymbol{\rho}^{k}_{\mathbf{P}}\}_{k=1}^{\infty}=\{\rho^{k}_{G},\{\rho^{k}_{P_{x}},\rho^{k}_{P_{v}}\}\}_{k=1}^{\infty}=\{\Pi G^{k}-G^{k},\{\Pi_{x} P^{k}_{x}-P^{k}_{x},\Pi_{v}^{-}P^{k}_{v}-P^{k}_{v}\}\}_{k=1}^{\infty}$.
	%	{\color{red}Similar to the derivations of Theorem \ref{thmstab}, we have
	%	\begin{equation*}
	%		\begin{aligned}
	%			\mathcal{B}_{n}(\boldsymbol{\Pi}\mathbf{e},\boldsymbol{\psi}_{h})=	&\sum_{i,j=1}^{N}\left(\sum_{k=0}^{n-1}d^{(\alpha)}_{k}(\Pi e^{n-k}_{G}-\Pi e^{0}_{G},\Pi e^{n}_{G})_{(I_{i,j})}\right)-\frac{1}{2}\sum_{i,j=1}^{N}\|\Pi e^{n}_{G}\|_{L^{2}(I_{i,j})}^{2}\\
	%			&+\frac{1}{2}\sum_{j=1}^{N}\int_{v_{j-1}}^{v_{j}}\left(\sum_{i=1}^{n-1}v\left (\Pi e^{n}_{G}(x_{i}^{-})-\Pi e^{n}_{G}(x_{i}^{+})\right )^{2}+v\left (\Pi e^{n}_{G}(x_{0}^{+})\right )^{2}+v\left (\Pi e^{n}_{G}(x_{N}^{-})\right )^{2}\right)\\
	%			&+\frac{1}{2}\sum_{i=1}^{N}\int_{x_{i-1}}^{x_{i}}\left(\sum_{j=1}^{n-1}v\left (\Pi e^{n}_{G}(v_{j}^{-})-\Pi e^{n}_{G}(v_{j}^{+})\right )^{2}+v_{N}\left (\Pi e^{n}_{G}(v_{N}^{-})\right )^{2}+\left(\frac{2\vartheta}{h}\right)\left (\Pi e^{n}_{G}(v_{0}^{+})\right )^{2}\right)\\
	%			&+\sum_{i,j=1}^{N}\|\Pi_{v}^{-}\mathbf{e}^{n}_{P_{v}}\|_{L^{2}(I_{i,j})}^{2}.
	%		\end{aligned}
	%	\end{equation*}}
	Similar to the derivations of Theorem \ref{thmstab}, we have
	\begin{equation*}
		\begin{aligned}
			&2\mathcal{B}_{n}(\boldsymbol{\Pi}\mathbf{e},\boldsymbol{\psi}_{h})\\
			\geq	&\sum_{k=0}^{n-1}d^{(\alpha)}_{k}\left(\sum_{i,j=1}^{N}\|\Pi e^{n-k}_{G}\|_{L^{2}(I_{i,j})}^{2}-\sum_{i,j=1}^{N}\|\Pi e^{0}_{G}\|_{L^{2}(I_{i,j})}^{2}\right)-\sum_{i,j=1}^{N}\|\Pi e^{n}_{G}\|_{L^{2}(I_{i,j})}^{2}\\
			&+\sum_{j=1}^{N}\int_{v_{j-1}}^{v_{j}}\left(\sum_{i=1}^{N-1}v\left (\Pi e^{n}_{G}(x_{i}^{-})-\Pi e^{n}_{G}(x_{i}^{+})\right )^{2}+v\left (\Pi e^{n}_{G}(x_{0}^{+})\right )^{2}+v\left (\Pi e^{n}_{G}(x_{N}^{-})\right )^{2}\right)dv\\
			&+\sum_{i=1}^{N}\int_{x_{i-1}}^{x_{i}}\left(\sum_{j=1}^{N-1}v_{j}\left (\Pi e^{n}_{G}(v_{j}^{-})-\Pi e^{n}_{G}(v_{j}^{+})\right )^{2}+v_{N}\left (\Pi e^{n}_{G}(v_{N}^{-})\right )^{2}+\left(\frac{2\vartheta}{h}\right)\left (\Pi e^{n}_{G}(v_{0}^{+})\right )^{2}\right)dx\\
			&+2\sum_{i,j=1}^{N}\|\Pi_{v}^{-}e^{n}_{P_{v}}\|_{L^{2}(I_{i,j})}^{2}.
		\end{aligned}
	\end{equation*}
	Furthermore, there holds
	\begin{equation*}
		\begin{aligned}
			\tau^{\alpha}\mathcal{B}_{n}(\boldsymbol{\rho},\boldsymbol{\psi}_{h})=&\tau^{\alpha}\sum_{i,j=1}^{N}\Bigg(\sum_{k=0}^{n-1}d^{(\alpha)}_{k}\left((\rho^{n-k}_{G}-\rho^{0}_{G}),\mu_{h}\right)_{I_{i,j}}+\left(v\rho^{n}_{P_{x}},\mu_{h}\right)_{I_{i,j}}-\left(v\rho^{n}_{P_{v}},\mu_{h}\right)_{I_{i,j}}\\
			&\qquad\qquad-\left (\frac{\partial}{\partial v}\rho^{n}_{P_{v}},\mu_{h}\right )_{I_{i,j}}+\int_{x_{i-1}}^{x_{i}}(\rho^{n}_{P_{v}}-\hat{\rho}^{n}_{P_{v}})\mu_{h}|_{v=v_{j-1}}^{v=v_{j}}dx-\left(\rho^{n}_{G},\mu_{h}\right)_{I_{i,j}}\\
			&\qquad\qquad-\left(\boldsymbol{\rho}^{n}_{\mathbf{P}},\boldsymbol{\nu}_{h}\right)_{I_{i,j}}+(\nabla \rho^{n}_{G},\boldsymbol{\nu}_{h})_{I_{i,j}}-(\rho^{n}_{G}-\hat{\rho}^{n}_{G},\mathbf{n}\cdot\boldsymbol{\nu}_{h})_{\partial I_{i,j}}\\
			&\qquad\qquad-(\rho^{n}_{P_{v}},\eta_{h})_{I_{i,j}}+\left (\frac{\partial}{\partial v} \rho^{n}_{G},\eta_{h}\right )_{I_{i,j}}-\int_{x_{i-1}}^{x_{i}}(\rho^{n}_{G}-\hat{\rho}^{n}_{G})\eta_{h}|_{v=v_{j-1}}^{v=v_{j}}dx\Bigg)\\
			\leq&\uppercase\expandafter{\romannumeral1}+\uppercase\expandafter{\romannumeral2}+\uppercase\expandafter{\romannumeral3}+\uppercase\expandafter{\romannumeral4}+\uppercase\expandafter{\romannumeral5}+\uppercase\expandafter{\romannumeral6}+\uppercase\expandafter{\romannumeral7},
		\end{aligned}
	\end{equation*}
	where
	\begin{equation*}
		\begin{aligned}
			\uppercase\expandafter{\romannumeral1}=&\tau^{\alpha}\sum_{i,j=1}^{N}\Bigg(\left(v\rho^{n}_{P_{x}},\mu_{h}\right)_{I_{i,j}}-\left(v\rho^{n}_{P_{v}},\mu_{h}\right)_{I_{i,j}}-\left(\boldsymbol{\rho}^{n}_{\mathbf{P}},\boldsymbol{\nu}_{h}\right)_{I_{i,j}}-\left(\rho^{n}_{G},\mu_{h}\right)_{I_{i,j}}\Bigg),\\
			\uppercase\expandafter{\romannumeral2}=&\tau^{\alpha}\sum_{i,j=1}^{N}\Bigg(-\int_{x_{i-1}}^{x_{i}}\hat{\rho}^{n}_{P_{v}}\mu_{h}|_{v=v_{j-1}}^{v=v_{j}}dx\Bigg),\\
			\uppercase\expandafter{\romannumeral3}=&-\tau^{\alpha}\sum_{i,j=1}^{N}\Bigg(( \rho^{n}_{G},\nabla\cdot\boldsymbol{\nu}_{h})_{I_{i,j}}-(\hat{\rho}^{n}_{G},\mathbf{n}\cdot\boldsymbol{\nu}_{h})_{\partial I_{i,j}}\Bigg),\\
			\uppercase\expandafter{\romannumeral4}=&-\tau^{\alpha}\sum_{i,j=1}^{N}\Bigg(\left ( \rho^{n}_{G},\frac{\partial}{\partial v}\eta_{h}\right )_{I_{i,j}}-\int_{x_{i-1}}^{x_{i}}\hat{\rho}^{n}_{G}\eta_{h}|_{v=v_{j-1}}^{v=v_{j}}dx\Bigg),\\
			\uppercase\expandafter{\romannumeral5}=&\tau^{\alpha}\sum_{i,j=1}^{N}\Bigg(\left (\rho^{n}_{P_{v}},\frac{\partial}{\partial v}\mu_{h}\right )_{I_{i,j}}\Bigg),\\
			\uppercase\expandafter{\romannumeral6}=&\tau^{\alpha}\sum_{i,j=1}^{N}\Bigg(-(\rho^{n}_{P_{v}},\eta_{h})_{I_{i,j}}\Bigg),\\
			\uppercase\expandafter{\romannumeral7}=&\tau^{\alpha}\sum_{i,j=1}^{N}\sum_{k=0}^{n-1}d^{(\alpha)}_{k}\left((\rho^{n-k}_{G}-\rho^{0}_{G}),\mu_{h}\right)_{I_{i,j}}.
		\end{aligned}
	\end{equation*}
	The property of the projection \citep[see, e.g.,][]{Dong.2009AoaLDGMfLTDFOP}, the Cauchy-Schwarz inequality, and the Young inequality show that for $\epsilon>0$,
	\begin{equation*}
		\begin{aligned}
			\uppercase\expandafter{\romannumeral1}
			\leq& C\epsilon^{-1}\tau^{\alpha}h^{2k+2}+\epsilon\tau^{\alpha}\|\Pi e^{n}_{G}\|_{L^{2}(\Omega)}^{2}.
		\end{aligned}
	\end{equation*}
	According to \eqref{eqfluxv2},  for $\epsilon>0$, there exists 
	\begin{equation*}
		\uppercase\expandafter{\romannumeral2}\leq C\tau^{\alpha}\epsilon^{-1} h^{2k+2}+\tau^{\alpha}\epsilon\sum_{i=1}^{N}\int_{x_{i-1}}^{x_{i}}\left(\frac{2\vartheta}{h}\right )(\Pi e^{n}_{G}(v_{0}^{+}))^{2}dx.
	\end{equation*}
	According to Lemma \ref{lemspec}, for $\epsilon>0$, one has
	\begin{equation*}
		\begin{aligned}
			\uppercase\expandafter{\romannumeral3}\leq & C\tau^{\alpha}\sum_{i,j=1}^{N}\Bigg|( \rho^{n}_{G},\nabla\cdot\boldsymbol{\nu}_{h})_{I_{i,j}}-(\hat{\rho}^{n}_{G},\mathbf{n}\cdot\boldsymbol{\nu}_{h})_{\partial I_{i,j}}\Bigg|                                                                                  \\
			\leq                                       & C\tau^{\alpha}\sum_{i,j=1}^{N}\Bigg|\left( v\rho^{n}_{G},\left (\frac{\partial}{\partial x} \Pi e^{n}_{G}- \frac{\partial}{\partial v} \Pi e^{n}_{G}\right )\right )_{I_{i,j}}-(\hat{\rho}^{n}_{G},\mathbf{n}\cdot\boldsymbol{\nu}_{h})_{\partial I_{i,j}}\Bigg| \\
			& +C\tau^{\alpha}\sum_{i,j=1}^{N}\Bigg|\left( \rho^{n}_{G}, \Pi e^{n}_{G}\right )_{I_{i,j}}\Bigg|                                                                                                                                                                  \\
			\leq                                       & C\epsilon^{-1}\tau^{\alpha}h^{2k+2}+\tau^{\alpha}\epsilon\| \Pi e^{n}_{G}\|_{L^{2}(\Omega)}^{2}.
		\end{aligned}
	\end{equation*}
	Similarly, for $\epsilon>0$,  there holds
	\begin{equation*}
		\uppercase\expandafter{\romannumeral4}\leq C\epsilon^{-1}\tau^{\alpha}h^{2k+2}+\tau^{\alpha}\epsilon\|\Pi_{v}^{-}e^{n}_{P_{v}}\|_{L^{2}(\Omega)}^{2}.
	\end{equation*}
	The property of projection \citep[see, e.g.,][]{Dong.2009AoaLDGMfLTDFOP} implies
	\begin{equation*}
		\uppercase\expandafter{\romannumeral5}=0.
	\end{equation*}
	As for $\uppercase\expandafter{\romannumeral6}$, the Cauchy-Schwarz inequality, the Young inequality, and the property of projection lead to that for $\epsilon>0$,
	\begin{equation*}
		\begin{aligned}
			&\uppercase\expandafter{\romannumeral6}\leq C\epsilon^{-1}\tau^{\alpha}h^{2k+2}+\tau^{\alpha}\epsilon\|\Pi^{-}_{v}e^{n}_{P_{v}}\|^{2}_{L^{2}(\Omega)}.
		\end{aligned}
	\end{equation*}
	Using the definition of $d_{k}^{(\alpha)}$ and Lemma \ref{lempropb}, we have, for $\epsilon>0$,
	\begin{equation*}
		\uppercase\expandafter{\romannumeral7}\leq C\epsilon^{-1}h^{2k+2}+\epsilon\|\Pi e^{n}_{G}\|^{2}_{L^2(\Omega)}.
	\end{equation*}
	Combining \eqref{eqBerror} and choosing $\epsilon>0$ small enough, we can obtain
	\begin{equation*}
		\begin{aligned}
			\tau^{\alpha}\sum_{k=0}^{n-1}d^{(\alpha)}_{k}\left(\sum_{i,j=1}^{N}\|\Pi e^{n-k}_{G}\|_{L^{2}(I_{i,j})}^{2}-\sum_{i,j=1}^{N}\|\Pi e^{0}_{G}\|_{L^{2}(I_{i,j})}^{2}\right)\qquad\qquad\qquad&\\-\left (\frac{3}{2}\tau^{\alpha}+\epsilon\right )\sum_{i,j=1}^{N}\|\Pi e^{n}_{G}\|_{L^{2}(I_{i,j})}^{2}&\leq Ch^{2k+2},
		\end{aligned}
	\end{equation*}
	which leads to
	\begin{equation*}
		\begin{aligned}
			\tau^{\alpha}\sum_{k=0}^{n-1}d^{(\alpha)}_{k}\left(\sum_{i,j=1}^{N}\|\Pi e^{n-k}_{G}\|_{L^{2}(\Omega)}^{2}-\sum_{i,j=1}^{N}\|\Pi e^{0}_{G}\|_{L^{2}(I_{i,j})}^{2}\right)\qquad&\\
			-\left (\frac{3}{2}\tau^{\alpha}+\epsilon\right )\sum_{i,j=1}^{N}\left (\|\Pi e^{n}_{G}\|_{L^{2}(I_{i,j})}^{2}-\|\Pi e^{0}_{G}\|_{L^{2}(I_{i,j})}^{2}\right )&\leq Ch^{2k+2}.
		\end{aligned}
	\end{equation*}
	Further, the definition of $d^{(\alpha)}_{k}$ yields
	\begin{equation*}
		\|\Pi e^{n-k}_{G}\|_{L^{2}(\Omega)}^{2}\leq Ch^{2k+2}.
	\end{equation*}
	Thus combining the above estimate and the property of projection leads to the desired result.  %the desired result can be obtained.
\end{proof}

\section{Numerical Experiments}
In this section, we provide two examples to verify the temporal and spatial convergence rates, respectively. In the following, we take $\vartheta=1$.
\begin{example}
	Here we show temporal convergence rates for the system (\ref{eqretosol}) with boundary condition (\ref{eqbdcond}). We take $T=1$, $k=1$, $h=1/16$, and for the initial condition consider the following three cases, i.e.,
	\begin{enumerate}[(a)]
		\item\label{exconda}
		\begin{equation*}
			G_{0}=x\times\sin(\pi v),\quad f=0;
		\end{equation*}
		\item\label{excondb}
		\begin{equation*}
			G_{0}=\chi_{(0.5,1)}(x)\chi_{(0,0.5)}(v),\quad f=0;
		\end{equation*}
		\item\label{excondc}
		\begin{equation*}
			G_{0}=0,\quad f=\chi_{(0.5,1)}(x)\chi_{(0,0.5)}(v)t^{0.8},
		\end{equation*}
	\end{enumerate}
	where $\chi_{(a,b)}$ is the characteristic function on $(a,b)$.
	Because the exact solution is unknown, the errors and convergence rates can be calculated by
	\begin{equation*}
		E_{\tau}=\|G_{\tau}-G_{\tau/2}\|_{L^{2}(\Omega)},\quad {\rm rate}=\frac{\ln(E_{\tau}/E_{\tau/2})}{\ln(2)},
	\end{equation*}
	where $G_{\tau}$ denotes the numerical solution of $G$ at time $T$ with step size $\tau$.
	
	We first provide the numerical results for the system (\ref{eqretosol}) with boundary condition (\ref{eqbdcond}) and initial condition \eqref{exconda}. From the errors and convergence rates shown in Table \ref{tab:timesmooth}, it can be noted that all the results agree with Theorem \ref{thmsemierr}. Then we consider the system with the initial condition \eqref{excondb}. Although the initial condition is nonsmooth, the convergence rates shown in Table \ref{tab:timenonsmooth} are still $\mathcal{O}(\tau)$, which validate the results of Theorem \ref{thmsemierr}. Next, for the initial condition \eqref{excondc}, the corresponding convergence rates presented in Table \ref{tab:timenonhom} are same with the predicted ones in Theorem \ref{thmsemierr}.
	\begin{table}[htbp]
		\caption{Temporal errors and convergence rates for the system (\ref{eqretosol}) with boundary condition (\ref{eqbdcond}) and initial condition \eqref{exconda}}
		\begin{center}
			
			\begin{tabular}{cccccc}
				\hline
				$\alpha\backslash 1/\tau$	& 10 & 20 & 40 & 80 & 160   \\
				\hline
				0.3 & 2.726E-04 & 1.329E-04 & 6.564E-05 & 3.262E-05 & 1.626E-05   \\
				& Rate & 1.0360  & 1.0179  & 1.0089  & 1.0045    \\
				0.5 & 4.442E-04 & 2.138E-04 & 1.049E-04 & 5.197E-05 & 2.586E-05   \\
				& Rate & 1.0550  & 1.0272  & 1.0135  & 1.0067   \\
				0.8 & 5.479E-04 & 2.487E-04 & 1.187E-04 & 5.803E-05 & 2.869E-05   \\
				& Rate & 1.1395  & 1.0668  & 1.0326  & 1.0161   \\
				\hline
			\end{tabular}
		\end{center}
		\label{tab:timesmooth}
	\end{table}
	\begin{table}[htbp]
		\caption{Temporal errors and convergence rates for the system (\ref{eqretosol}) with boundary condition (\ref{eqbdcond}) and initial condition \eqref{excondb}}
		\begin{center}
			
			\begin{tabular}{cccccc}
				\hline
				$\alpha\backslash 1/\tau$& 10 & 20 & 40 & 80 & 160 \\
				\hline
				0.2 & 1.264E-04 & 6.192E-05 & 3.065E-05 & 1.525E-05 & 7.603E-06 \\
				& Rate & 1.0294  & 1.0147  & 1.0073  & 1.0037  \\
				0.4 & 2.531E-04 & 1.227E-04 & 6.041E-05 & 2.997E-05 & 1.493E-05 \\
				& Rate & 1.0447  & 1.0222  & 1.0110  & 1.0055  \\
				0.6 & 3.521E-04 & 1.677E-04 & 8.184E-05 & 4.044E-05 & 2.010E-05 \\
				& Rate & 1.0705  & 1.0346  & 1.0172  & 1.0085  \\
				\hline
			\end{tabular}
		\end{center}
		\label{tab:timenonsmooth}
	\end{table}

	\begin{table}[htbp]
		\caption{Temporal errors and convergence rates for the system (\ref{eqretosol}) with boundary condition (\ref{eqbdcond}) and initial condition \eqref{excondc}}
		\begin{center}
			\begin{tabular}{ccccccc}
				\hline
				$\alpha\backslash 1/\tau$& 10 & 20 & 40 & 80 & 160   \\
				\hline
				0.2 & 6.500E-06 & 3.352E-06 & 1.705E-06 & 8.610E-07 & 4.329E-07   \\
				& Rate & 0.9556  & 0.9751  & 0.9858  & 0.9919    \\
				0.5 & 9.831E-06 & 5.076E-06 & 2.584E-06 & 1.306E-06 & 6.567E-07   \\
				& Rate & 0.9537  & 0.9740  & 0.9850  & 0.9913    \\
				0.7 & 6.149E-06 & 3.191E-06 & 1.630E-06 & 8.253E-07 & 4.158E-07   \\
				& Rate & 0.9463  & 0.9693  & 0.9818  & 0.9890    \\
				\hline
			\end{tabular}
		\end{center}
		\label{tab:timenonhom}
	\end{table}
	
\end{example}

\begin{example}
	Here we validate the spatial convergence rates for the scheme (\ref{eqfullscheme}). We take $T=1$,
	\begin{equation*}
		G_{0}=\sin(\pi x)\sin(\pi v)
	\end{equation*}
	and
	\begin{equation*}
		\begin{aligned}
			f=&\Gamma(\alpha+1)\sin(\pi x)\sin(\pi v)\\
			&+\Bigg(\pi^{2}\sin(\pi x)\sin(\pi v)+v\pi\cos(\pi x)\sin(\pi v)\\
			&-v\pi\sin(\pi x)\cos(\pi v)-\sin(\pi x)\sin(\pi v)\Bigg)t^{\alpha+1},
		\end{aligned}
	\end{equation*}
	%Thus the exact solution can be written as
which implies the exact solution
	\begin{equation*}
		G=(t^{\alpha}+1)\sin(\pi x)\sin(\pi v).
	\end{equation*}
	
	In Table \ref{tab:spaO1}, we choose the order of approximation polynomial $k=1$ and $\tau=\frac{1}{100}$. We find the convergence rates are $\mathcal{O}(h^{2})$, which are same with the theoretical ones in Theorem \ref{thmfullerr}. In Table \ref{tab:spaO2}, we take the order of approximation polynomial $k=2$ and to investigate the convergence in space and eliminate the influence from temporal discretization, we take $\tau=\frac{1}{200}$. From Table  \ref{tab:spaO2}, we see the convergence rates are $\mathcal{O}(h^{3})$, which are consistent with the predicted ones.
	\begin{table}[htbp]
		\caption{Spatial errors and convergence rates for the scheme (\ref{eqfullscheme}) with the order of approximation polynomial $k=1$}
		\begin{center}
			\begin{tabular}{cccccc}
				\hline
				$\alpha\backslash 1/h$	& 4 & 8 & 12 & 16 & 20 \\
				\hline
				0.3 & 1.032E-01 & 2.625E-02 & 1.175E-02 & 6.635E-03 & 4.259E-03 \\
				& Rate & 1.9755  & 1.9830  & 1.9859  & 1.9867  \\
				0.5 & 1.032E-01 & 2.623E-02 & 1.174E-02 & 6.627E-03 & 4.252E-03 \\
				& Rate & 1.9757  & 1.9835  & 1.9869  & 1.9884  \\
				0.7 & 1.031E-01 & 2.622E-02 & 1.173E-02 & 6.621E-03 & 4.247E-03 \\
				& Rate & 1.9758  & 1.9839  & 1.9876  & 1.9896  \\
				\hline
			\end{tabular}
		\end{center}
		\label{tab:spaO1}
	\end{table}
	
	\begin{table}[htbp]
		\caption{Spatial errors and convergence rates for the scheme (\ref{eqfullscheme}) with the order of approximation polynomial $k=2$}
		\begin{center}
			\begin{tabular}{cccccc}
				\hline
				$\alpha\backslash 1/h$	& 4 & 8 & 12 & 16 & 20 \\
				\hline
				0.4 & 3.372E-03 & 4.285E-04 & 1.269E-04 & 5.365E-05 & 2.779E-05 \\
				& Rate & 2.9763  & 3.0014  & 2.9927  & 2.9477  \\
				0.6 & 3.371E-03 & 4.281E-04 & 1.266E-04 & 5.331E-05 & 2.730E-05 \\
				& Rate & 2.9770  & 3.0048  & 3.0069  & 2.9989  \\
				0.8 & 3.370E-03 & 4.280E-04 & 1.265E-04 & 5.321E-05 & 2.719E-05 \\
				& Rate & 2.9772  & 3.0058  & 3.0101  & 3.0089  \\
				\hline
			\end{tabular}
		\end{center}
		\label{tab:spaO2}
	\end{table}
\end{example}

\section{Conclusions}
We provide the regularity estimates and numerical analyses for fractional Klein-Kramers equation. By introducing a new positive operator $\mathcal{L}$ to overcome hypocoercivity of the original one and building its resolvent estimate, we give spatial and temporal regularity of the solution. Then, backward Euler convolution quadrature method and local discontinuous Galerkin method are used to approximate Riemann-Liouville fractional derivative and the operator $\mathcal{L}$, respectively, and the complete error analyses are also built.  Finally,  we perform the numerical experiments, which support the theoretical results.

\section*{Acknowledgements}

This work was supported by National Natural Science Foundation of China under Grant No. 12071195, AI and Big Data Funds under Grant No. 2019620005000775, and Fundamental Research Funds for the Central Universities under Grant Nos. lzujbky-2021-it26 and lzujbky-2021-kb15.

\bibliographystyle{IMANUM-BIB}
\bibliography{references}
\clearpage
\end{document}